\def\Bibtex{{\rm B\kern-.05em{\sc i\kern-.025em b}\kern-0.08em T\kern-.1667em\lower.7ex\hbox{E}\kern-.125emX}}
\newtheorem{theorem}{Theorem}
\newtheorem{lemma}[theorem]{Lemma}
\newtheorem{proposition}[theorem]{Proposition}
\theoremstyle{definition}
\newtheorem{definition}[theorem]{Definition}
\numberwithin{theorem}{section}
\numberwithin{equation}{section}
\DeclareMathOperator \PSH {{\rm PSH}}
\def\R{\mathbb{R}}
\def\f{\varphi}
\def\dc{dd^c}
\begin{document}
	\title[Continuity of Monge--Amp\`ere potentials]{Continuity of Monge--Amp\`ere Potentials with Prescribed Singularities} 
	\author{Quang-Tuan Dang}

	\address{The Abdus Salam International Centre for Theoretical Physics (ICTP), Str. Costiera, 11, 34151 Trieste, TS, Italy}
	\email{qdang@ictp.it}
	\keywords{Monge--Amp\`ere operator, Prescribed singularities}
	%\thanks{The work is partially supported by the ANR project PARAPLUI}
	\date{\today}
	\subjclass[2020]{32U15, 32Q15, 32W20}
	\begin{abstract}
		We study the continuity
		of solutions to complex Monge–Ampère equations with prescribed singularity type. This generalizes the previous results of DiNezza--Lu (J Reine Angew Math 727:145-167, 2017) and the author~(Int Math Res Not 14:11180-11201, 2022). As an application, we can run the pluripotential Monge--Amp\`ere flows in (J Funct Anal 282(6):65, 2022) starting at a current with prescribed singularities.
	\end{abstract}
	
	\maketitle
	
	%\tableofcontents

	\section{Introduction}
	
	Complex Monge--Amp\`ere equations play a crucial role in studying canonical metrics in K\"ahler geometry, following Yau's solution~\cite{yau1978ricci} to the Calabi conjecture. As evidenced by recent developments in
	connection with the Minimal Model Program, it is thus desirable to construct
	canonical metrics on varieties with mild singularities; see~\cite{eyssidieux2009singular,berman2019kahler} and references therein. They led one to the study of degenerate complex Monge--Amp\`ere equations.
	
	In a series of recent papers~\cite{darvas2018monotonicity,darvas2021log}, Darvas--DiNezza--Lu intensively studied complex Monge--Ampère equations with prescribed singularities. They proved the existence and uniqueness of solutions in the context of big cohomology classes. However, the regularity is unknown.

	To state our main result, let $(X,\omega)$ be a compact K\"ahler manifold of complex dimension $n$ and $\theta$ be a smooth closed (1,1) form representing a big cohomology class. We let $\PSH(X,\theta)$ denote the set of $\theta$-psh functions. Recall that the cohomology class $\{\theta\}$ is {\em big} if there exists $\f\in\PSH(X,\theta-\varepsilon\omega)$ with analytic singularities for some $\varepsilon>0$. Its {\em ample locus} is denoted by $\textrm{Amp}(\theta)$.

	Fixing $\psi\in\PSH(X,\theta)$ and $0\leq f\in L^p(X)$, $p>1$, we look for a solution $\f\in \PSH(X,\theta)$ satisfying
	\begin{equation}\label{eq: CMAE}  
	\theta^n_\f=f\omega^n,\quad
	|\f-\psi|\leq C, 
	\end{equation}
	where $\theta_\f^n$ denotes the non-pluripolar product of $\f$, introduced in~\cite{boucksom2010monge}. The last condition means that $\f$ and $\psi$ have the same singularity type. 
	
	As mentioned in~\cite{darvas2018monotonicity}, for the solvability of~\ref{eq: CMAE}, one needs to impose the necessary condition that $\psi$ has {\em model type singularities}, i.e., $\int_X\theta_\psi^n>0$ and $\psi-P_\theta[\psi]$ is bounded on $X$ (see Sect.~\ref{sect: envelopes} for precise definition).  It is crucial to emphasize that model type singularities
	are natural and appear in many different contexts of complex differential geometry. For instance, all analytic and minimal singularities are of the model type.

	We state the main result, which generalizes the one of DiNezza--Lu~\cite{di2017complex} and of the author~\cite{dang2022continuity}. 
	\begin{theorem}[see Theorem~\ref{thm2}] \label{mainthm}
		Assume that $\chi\in\PSH(X,\theta)$ has analytic singularities. Let $\mu=ge^{-u} dV_X$ be a positive  measure such that $\mu(X)=\int_X\theta_\chi^n>0$, where $g\in L^p(X)$, $p>1$, and $u$ is a quasi-psh function. 
		Then the unique solution $\f\in\mathcal{E}(X,\theta,P_\theta[\chi])$ of the equation
		\[\theta_{\f}^n=\mu,\quad\sup_X\f=0,\]
		is continuous in  $\textrm{Amp}(\theta){\setminus}  (\{\chi=-\infty\}\cup E_{1/q}(u))$, where $q$ is the conjugate exponent of $p$, i.e., $\frac{1}{p}+\frac{1}{q}=1$, and $E_{1/q}(u)=\{x\in X:\nu(u,x)\geq 1/q\}$ with $\nu(u,x)$ being the Lelong number of $u$ at $x$. 
	\end{theorem}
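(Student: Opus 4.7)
The plan is to approximate $\mu$ by measures with densities in $L^{p}(X)$, apply the continuity theorem of~\cite{dang2022continuity} to obtain continuous approximating solutions, and then prove that these approximants converge locally uniformly on compact subsets of
\[
G:=\textrm{Amp}(\theta)\setminus\bigl(\{\chi=-\infty\}\cup E_{1/q}(u)\bigr).
\]
Concretely, I would set $u_j:=\max(u,-j)$, a bounded quasi-psh function decreasing to $u$, and put $\mu_j:=c_j\, g\, e^{-u_j}\, dV_X$, where $c_j>0$ is chosen so that $\mu_j(X)=\int_X\theta_\chi^n$. Monotone convergence gives $c_j\to 1$ and $\mu_j\to\mu$ weakly, while $g e^{-u_j}\in L^p(X)$. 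The existence theory of~\cite{darvas2018monotonicity} combined with~\cite{dang2022continuity} then produces a unique $\varphi_j\in\mathcal{E}(X,\theta,P_\theta[\chi])$ with $\sup_X\varphi_j=0$, $\theta_{\varphi_j}^n=\mu_j$, and each $\varphi_j$ continuous on $\textrm{Amp}(\theta)\setminus\{\chi=-\infty\}$.

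The crucial local input comes from Skoda's integrability theorem. On a fixed compact $K\subset G$ the Lelong numbers satisfy $\nu(u,x)<1/q$ uniformly, so a H\"older computation combined with Skoda's estimate produces an exponent $p'>1$ and an open neighborhood $U\supset K$, relatively compact in $\textrm{Amp}(\theta)\setminus\{\chi=-\infty\}$, such that $ge^{-u}\in L^{p'}(U)$ and $ge^{-u_j}\to ge^{-u}$ in $L^{p'}(U)$. To pass from approximants to the limit solution on $K$, I would combine global $L^1$--stability of Monge--Amp\`ere solutions in the relative class $\mathcal{E}(X,\theta,P_\theta[\chi])$ (as in~\cite{darvas2018monotonicity}) with a localized Kolodziej--Nguyen type estimate on $U$, producing $\|\varphi_j-\varphi\|_{L^\infty(K)}\to 0$. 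Continuity of $\varphi$ on $K$, and hence on all of $G$, then follows from uniform convergence of continuous functions.

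The main obstacle is precisely this last local uniform comparison. Both $\varphi_j$ and $\varphi$ may be large negative near $\{\chi=-\infty\}\cap\partial G$, so the classical Kolodziej capacity estimate cannot be invoked directly. Passing to the bounded differences $\varphi_j-P_\theta[\chi]$ and $\varphi-P_\theta[\chi]$ reduces matters to a uniformly bounded family satisfying a Monge--Amp\`ere equation with $L^{p'}$--convergent right-hand side on $U$, but one still needs a capacity estimate whose constants remain controlled uniformly in $j$ and do not degenerate as $U$ is enlarged to approach $\{\chi=-\infty\}$. I expect this to require adapting the envelope/balayage arguments of~\cite{di2017complex,dang2022continuity} to the present weighted setting; pinning down this quantitative local stability is where the proof will have to do its real work.
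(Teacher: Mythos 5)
Your proposal has a genuine gap that makes it circular. In the second paragraph you invoke the continuity theorem of~\cite{dang2022continuity} to conclude that each approximant $\varphi_j\in\mathcal{E}(X,\theta,P_\theta[\chi])$, solving $\theta_{\varphi_j}^n=\mu_j$ with $L^p$ density, is continuous on $\mathrm{Amp}(\theta)\setminus\{\chi=-\infty\}$. But~\cite{dang2022continuity} treats solutions with \emph{minimal} singularities, i.e., in $\mathcal{E}(X,\theta)$; it says nothing about solutions with prescribed singularity type $P_\theta[\chi]$ when $\chi$ has analytic singularities, and $\mathcal{E}(X,\theta,P_\theta[\chi])\neq\mathcal{E}(X,\theta)$ in general. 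The continuity of such $\varphi_j$ is exactly the bounded-$u$ special case of the theorem you are trying to prove, so you cannot take it as a black box. You also leave the local stability estimate --- the step you yourself flag as doing ``the real work'' --- unaddressed, and the truncation $u_j=\max(u,-j)$ does not obviously give the uniform control you would need, precisely because the constants in any Ko{\l}odziej-type bound depend on an $L^{p'}$ norm of $ge^{-u_j}$ over a neighborhood of $K$, and these do not converge without a Skoda-type argument that must be quantified carefully on sets touching $\{\chi=-\infty\}$.

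The paper proceeds along two quite different lines, neither of which matches yours. For Theorem~\ref{mainthm} specifically, it uses Hironaka's resolution of singularities: one pulls back by a modification $\pi:\tilde{X}\to X$ resolving the analytic singularities of $\chi$, so that $\pi^*(\theta+dd^c\chi)=\tilde{\theta}+[D]$ with $\tilde{\theta}$ semi-positive, and there is a bijection between $\mathcal{E}(X,\theta,P_\theta[\chi])$ and $\mathcal{E}(\tilde{X},\tilde{\theta})$. The prescribed-singularity problem on $X$ becomes a minimal-singularity problem on $\tilde{X}$, where~\cite{dang2022continuity} legitimately applies, and the continuity is then pushed forward. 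For the stronger Theorem~\ref{thm: mainthm} (from which Theorems~\ref{thm2} and~\ref{mainthm} also follow), the paper works directly via the quasi-psh envelope technique of Guedj--Lu: Step~1 uses an envelope $P_{\delta_0\omega}(\varphi-\psi)$ and Ko{\l}odziej's estimate to get a lower bound $\varphi\geq\psi-A$ on $U$; Step~2 produces a decreasing sequence of continuous $\theta$-psh functions $\varphi_j=P_\theta[\chi](h_j)$ via rooftop envelopes; Step~3 runs a comparison-principle argument with auxiliary functions $v_{j,\lambda}$ to show $\varphi_j\to\varphi$ locally uniformly. If you want to salvage your approximation scheme, you should either perform the resolution step first so that~\cite{dang2022continuity} becomes applicable, or replace the appeal to~\cite{dang2022continuity} by the envelope argument of Theorem~\ref{thm: mainthm}; and for the quantitative stability you should look at Demailly's equisingular approximation of $u$ (as in the proof of Theorem~\ref{thm2}) rather than crude truncation, since it replaces $u$ by quasi-psh functions with analytic singularities and matching Lelong numbers, which interacts much better with Skoda's theorem.
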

	The existence (and uniqueness) of solution is shown in~\cite{darvas2018monotonicity,darvas2020metric}. The characterization of solutions belonging to weighted subspace was discussed in \cite{do2020complex,darvas2023relative}.
	A celebrated result of Siu shows that $E_{1/q}(u)$ is a closed analytic subset, which means that $\f$ is continuous in a Zariski open set.
	
	\medskip

	The case when $\chi$ has minimal singularities has been shown in ~\cite{dang2022continuity}.  If we additionally assume that $u$ is bounded on $X$, then the solution $\f$ is H\"older continuous in the ample locus $\textrm{Amp}(\theta)$, thanks to~\cite{demailly2014holder}. In case $\mu$ is a smooth volume form, it is expected the solution $\f$ is smooth where $\chi$ is. But even if $\chi$ has minimal singularities, this is a widely open question. According to~\cite{boucksom2010monge}, the answer is affirmative under an extra assumption that $\{\theta\}$ is nef. 
	
	\medskip
	
	The strategy of the proof is the same as that of~\cite{di2017complex,dang2022continuity}. One cannot expect the Monge--Amp\`ere potential $\f$ to be globally bounded in the context of prescribed singularities. In the latter references, the proof relied on a theory of the generalized method of Ko{\l}odziej~\cite{kolodziej1998complex}, which makes use of a theory of generalized Monge--Amp\`ere capacities, developed in~\cite{di2015generalized}, also in~\cite{darvas2018monotonicity,darvas2021log}. We extend the result and propose in this paper an alternative proof by using the quasi-psh envelope technique, recently developed by Guedj--Lu~\cite{guedj2021quasi1,guedj2021quasi}.
	
	\medskip
	
	The result obtained in the context of elliptic equations allows us to have an analogous one in the parabolic case, continuing pluripotential Monge--Amp\`ere flows from degenerate initial data. This generalizes the one in~\cite{dang2022pluripotential}, and
	is briefly discussed in Sect.~\ref{sect: parabolic}.
	\subsubsection*{Acknowledgement} I am  grateful to Tam\'as Darvas for interesting exchanges. Many thanks to Antonio Trusiani for explaining the paper~\cite{trusiani2020continuity}.
	I thank the referee for the useful comments, which improved the presentation of the paper. This work is partially supported by the ANR project PARAPLUI.
	\section{Preliminaries}\label{sect: prel}
	In this section, we recall some terminology and notation.

	\subsection{Non-pluripolar Complex Monge--Ampère Measures}
	We denote $(X,\omega)$ a compact K\"ahler manifold of dimension $n$ and fix $\theta$ a  smooth closed (1,1) form. 
	
	An upper semi-continuous function $\f:X \rightarrow\mathbb{R}\cup\{-\infty\} $
	is called {\em quasi-plurisubharmonic} ({\em quasi-psh} for short) if it is locally the sum of a smooth and a plurisubharmonic (psh for short) function. We say that $\f$ is {\it $\theta$-plurisubharmonic}  ({\it $\theta$-psh} for short) if it is quasi-psh, and $\theta+\dc \f\geq 0$ in the sense of currents, where $d^c$ is normalized so that $\dc=\frac{i}{\pi}\partial\Bar{\partial}$. 
	%By the $\dc$-lemma any closed  positive $(1,1)$-current  $T$ cohomologous to $\theta$ can be written as $T=\theta+\dc\f$ for some $\theta$-psh function $\f$ which is furthermore unique up to an additive constant. 
	We let $\PSH(X,\theta)$ denote the set of all $\theta$-psh functions which are not identically $-\infty$. %This set is endowed with 
	%the weak topology, which coincides  with  the $L^1(X)$-topology. By Hartogs' lemma  $\f\mapsto\sup_X\f$ is continuous in this weak topology. %Since the set of  closed positive currents  in a fixed  cohomology class  is compact (in the weak topology), it follows that the set of $\f\in\PSH(X,\theta)$, with $\sup_X\f=0$ is compact. 
	The cohomology class $\{\theta\}$ is {\em big} if the set $\PSH(X,\theta-\varepsilon\omega)$ is not empty for some $\varepsilon>0$.
	
	From now on, we assume that $\{\theta\}$ is big  unless specified otherwise.
	Following Demailly \cite{demailly1992regularization}, we can find a closed positive $(1,1)$-current $T_0\in \{\theta\}$ such that $$T_0=\theta+\dc\chi\geq 2\delta_0\omega,$$ for some $\delta_0>0$ with $\chi$ a quasi-psh function with \emph{analytic singularities}, i.e., locally %and its global potential $\f$ are said to have \emph{analytic singularities} if there exists $c>0$ such that 
	$
	\chi=c\log\left[\sum_{j=1}^{N}|f_j|^2\right]+v,
	$
	%locally on $X$,	
	where  $v$ is a smooth function and the $f_j$'s are holomorphic functions. We see that on
	\[\Omega:=X\setminus \{\chi=-\infty\},\]
	$\chi$ is smooth.
	It moreover follows from~\cite{boucksom2004divisorial} that we can choose $\chi$ such that $\Omega$ is the largest Zariski open subset of all points $x\in X$ for which there exists a K\"ahler current $T_x\in \alpha$ with analytic singularities such that $T_x$ is smooth in a neighborhood of $x$. This locus is called the {\em ample locus} of $\theta$, also denoted by $\textrm{Amp}(\theta)$. Its complementary is called the non-K\"ahler locus, denoted by $E_{nK}(\theta)$. %there exists a K\"ahler current $T=\theta+\dc\chi$ with analytic singularities
	
	\smallskip
	
	Given $\f,\psi\in \PSH(X,\theta)$, we say that $\f$ is {\it less singular} than  $\psi$, and denote by $\psi\preceq\f$, if  there exists a constant $C$ such that $\psi\leq \f+C$ on $X$. We say that $\f,\psi$ have the {\em same singularity type}, and denote by $\f\simeq\psi$ if $\f\preceq\psi$ and $\psi\preceq \f$. This defines an equivalence relation on
	$\PSH(X,\theta)$, whose equivalence classes are the singularity types $[\f]$. There is a
	natural least singular potential in $\PSH(X,\theta)$ given by
	\begin{align*}
	V_{\theta}:=\sup\{\f\in \PSH(X,\theta): \f\leq 0\}.
	\end{align*}
	A function $\f$ is said to have minimal singularities if it has the same singularity type as $V_\theta$. In particular, $V_\theta=0$ if $\theta$ is semi-positive.
	Note also that $V_\theta$ is locally bounded in the ample locus.
	\smallskip
	
	Let $\theta^1,\cdots,\theta^n$ be closed smooth real (1,1) form representing big cohomology classes and $\f_j\in\PSH(X,\theta^j)$. Following the construction of Bedford--Taylor~\cite{bedford1987fine}, it has been shown in~\cite{boucksom2010monge} that for each $k\in\mathbb{N}$,
	\[ \mathbf{1}_{\cap_j\{\f_j>V_{\theta^j}-k\}}\theta^1_{\max(\f_1,V_{\theta^1}-k)}\wedge\cdots\wedge \theta^n_{\max(\f_n,V_{\theta^n}-k)}\] is well defined as a  Borel positive measure with finite total mass. The sequence of these measures is non-decreasing in $k$ and it converges weakly to the so-called {\em Monge--Amp\`ere product}, denoted by \[ \theta^1_{\f_1}\wedge \cdots\wedge \theta^n_{\f_n}.\] 
	This does not charge pluripolar sets by definition. When $\theta^1=\cdots=\theta^n=\theta$ and $\f_1=\cdots=\f_n$ we obtain the non-pluripolar
	Monge--Amp\`ere measure of $\f$, denoted by $(\theta+\dc\f)^n$ or simply by $\theta_\f^n$.
	
	Let $\phi_j\in \PSH(X,\theta^j)$ be such that $\phi_j$ is less singular than $\f_j$.  By~\cite[Thm. 2.4]{darvas2018monotonicity} we have that \[\int_X\theta^1_{\f_1}\wedge \cdots\wedge \theta^n_{\f_n}\leq \int_X\theta^1_{\phi_1}\wedge \cdots\wedge \theta^n_{\phi_n}. \]
	We say that $\theta^1_{\f_1}\wedge \cdots\wedge \theta^n_{\f_n}$ has {\em full mas} with respect to $\theta^1_{\phi_1}\wedge \cdots\wedge \theta^n_{\phi_n}$ if the equality holds. We let $\mathcal{E}(X,\theta^1_{\phi_1},\ldots,\theta^n_{\phi_n} )$ denote the set of such $n$-tuple $(\f_1,\ldots,\f_n)$. In the particular case when the potentials involved are from the same cohomology
	class $\{\theta\}$, and  with $\phi$ less singular than $\f$ and $\int_X\theta_\f^n=\int_X\theta_{\phi}^n$
	then we simply write $\f\in\mathcal{E}(X,\theta,\phi)$. Also, we simply write $\mathcal{E}(X,\theta)$ when $\phi=V_\theta$. 
	
	We  recall here the plurifine locality  of the non-pluripolar Monge--Amp\`ere measure (see~\cite[Sect. 1.2]{boucksom2010monge}) for later use. 
	\begin{lemma}
		Assume that $\f$, $\psi$ are $\theta$-psh functions such that $\f=\psi$ on an open set $U$ in the plurifine topology. Then 
		\begin{equation*}
		\mathbf{1}_U\theta_\f^n=\mathbf{1}_U\theta_\psi^n.
		\end{equation*}
	\end{lemma}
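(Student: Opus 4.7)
The plan is to reduce the claim to the classical Bedford--Taylor plurifine locality for locally bounded potentials, via the canonical truncations used to define the non-pluripolar product.

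For each integer $k\geq 1$, I would set $\f_k:=\max(\f,V_\theta-k)$ and $\psi_k:=\max(\psi,V_\theta-k)$. Both functions have the same singularity type as $V_\theta$, hence are locally bounded on the ample locus $\textrm{Amp}(\theta)$. Since $\f=\psi$ on the plurifine open set $U$, one has
\[ U\cap\{\f>V_\theta-k\}=U\cap\{\psi>V_\theta-k\}, \]
and $\f_k=\psi_k$ on this plurifine open subset.

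I would then invoke the Bedford--Taylor plurifine locality for the locally bounded potentials $\f_k,\psi_k$ on $\textrm{Amp}(\theta)$: on any plurifine open set where they agree, the classical Monge--Amp\`ere measures $\theta^n_{\f_k}$ and $\theta^n_{\psi_k}$ coincide. This yields
\[ \mathbf{1}_U\,\mathbf{1}_{\{\f>V_\theta-k\}}\,\theta^n_{\f_k}=\mathbf{1}_U\,\mathbf{1}_{\{\psi>V_\theta-k\}}\,\theta^n_{\psi_k} \]
as Borel measures on $\textrm{Amp}(\theta)$. Letting $k\to\infty$, both sides increase monotonically to $\mathbf{1}_U\theta^n_\f$ and $\mathbf{1}_U\theta^n_\psi$ respectively, by the very definition of the non-pluripolar Monge--Amp\`ere measure recalled in the paper. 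Since $X\setminus\textrm{Amp}(\theta)$ is contained in the pluripolar analytic set $\{\chi=-\infty\}$ and the non-pluripolar product does not charge pluripolar sets, the identity extends to all of $X$.

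The only technical point to check is that the set $\{\f>V_\theta-k\}$ is genuinely plurifine open (one writes it as a countable union over rationals of $\{\f>q\}\cap\{V_\theta<q+k\}$, the first factor being plurifine open and the second Euclidean open since $V_\theta$ is upper semi-continuous), so that the Bedford--Taylor locality actually applies. I do not anticipate any real obstacle here: the non-pluripolar product is constructed precisely so that such plurifine locality is inherited from the classical bounded theory.
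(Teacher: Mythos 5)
Your argument is correct and is precisely the standard proof of this fact; the paper does not reprove the lemma itself but simply recalls it from~\cite[Sect.~1.2]{boucksom2010monge}, where the same scheme is used: truncate to $\max(\f,V_\theta-k)$, invoke Bedford--Taylor plurifine locality for locally bounded potentials on $\textrm{Amp}(\theta)$, pass to the monotone limit defining the non-pluripolar product, and discard the pluripolar complement of $\textrm{Amp}(\theta)$. The only cosmetic remark is that your final paragraph re-derives that $\{\f>V_\theta-k\}$ is plurifine open, whereas the paper already records the simpler fact that $\{u<v\}$ is plurifine open whenever $u,v$ are quasi-psh, which applies directly.
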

	For practice, we stress that sets of the form
	$\{u < v\}$, where $u$, $v$ are quasi-psh functions, are open in the plurifine topology.

	We recall
	the following classical inequality (see e.g.,~\cite[Lemma 4.5]{darvas2020metric}).
	\begin{lemma}\label{lem: maxprin}
		Let $\f,\psi\in\PSH(X,\theta)$ be such that $\f\leq \psi$. Then
		\[\mathbf{1}_{\{\f=\psi\}}\theta^n_\f\leq \mathbf{1}_{\{\f=\psi\}}\theta^n_\psi. \]
	\end{lemma}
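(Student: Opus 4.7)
The plan is to approximate $\f$ from above by $\theta$-psh potentials of the same singularity type as $\psi$, use plurifine locality to identify the restriction of the non-pluripolar Monge--Amp\`ere product to the coincidence set, and then pass to the weak limit.

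For each $\delta>0$, set $g_\delta:=\max(\f,\psi-\delta)\in\PSH(X,\theta)$. Since $\psi-\delta\leq g_\delta\leq\psi$, the potential $g_\delta$ has the same singularity type as $\psi$, and $g_\delta\nearrow\psi$ as $\delta\searrow 0$. The set $\{\f>\psi-\delta\}$ is open in the plurifine topology (both $\f$ and $\psi-\delta$ are quasi-psh) and contains $\{\f=\psi\}$, and $g_\delta=\f$ there. The plurifine locality lemma recalled just before the statement therefore yields, for every $\delta>0$, the key identity
\[
\mathbf{1}_{\{\f=\psi\}}\,\theta_{g_\delta}^n \;=\; \mathbf{1}_{\{\f=\psi\}}\,\theta_\f^n.
\]
Next, I would appeal to weak convergence $\theta_{g_\delta}^n\to\theta_\psi^n$ as $\delta\searrow 0$. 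Since $g_\delta\simeq\psi$, the monotonicity theorem (Thm.~2.4 cited above) gives $\int_X\theta_{g_\delta}^n=\int_X\theta_\psi^n$, so every $g_\delta$ belongs to $\mathcal{E}(X,\theta,\psi)$; weak continuity of non-pluripolar products along monotone increasing sequences in this class is a known consequence of the Darvas--DiNezza--Lu machinery. Granting it, decompose $\theta_{g_\delta}^n=\mathbf{1}_{\{\f=\psi\}}\theta_{g_\delta}^n+\mathbf{1}_{\{\f\neq\psi\}}\theta_{g_\delta}^n$, and test against any continuous $\chi\geq 0$: using the key identity,
\[
\int_X \chi\,d\theta_\psi^n \;=\; \int_{\{\f=\psi\}} \chi\,d\theta_\f^n \;+\; \lim_{\delta\to 0}\int_{\{\f\neq\psi\}} \chi\,d\theta_{g_\delta}^n \;\geq\; \int_{\{\f=\psi\}} \chi\,d\theta_\f^n.
\]
Since $\chi\geq 0$ was arbitrary continuous, one obtains $\theta_\psi^n \geq \mathbf{1}_{\{\f=\psi\}}\theta_\f^n$ as Borel measures on $X$; multiplying by $\mathbf{1}_{\{\f=\psi\}}$ gives the claim.

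The main technical obstacle is justifying the weak continuity $\theta_{g_\delta}^n\to\theta_\psi^n$: because the potentials are unbounded, one cannot directly invoke the classical Bedford--Taylor continuity theorem. The natural remedy is to truncate further by $\max(\,\cdot\,,V_\theta-k)$ to reduce to the locally bounded setting where Bedford--Taylor continuity applies, and then send $k\to\infty$, using Theorem~2.4 together with the fact that the truncated sequences lie in $\mathcal{E}(X,\theta,\psi)$ with matching total mass.
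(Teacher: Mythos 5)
Your proof is correct but follows a genuinely different route from the paper's. The paper truncates \emph{both} $\f$ and $\psi$ by $V_\theta-t$, applies Demailly's maximum principle to the locally bounded truncations $\f^t,\psi^t$, uses plurifine locality to recover the original non-pluripolar measures on $\{\f>V_\theta-t\}\cap\{\psi>V_\theta-t\}$, and lets $t\to+\infty$; this is self-contained modulo the bounded case and the very definition of the non-pluripolar product. You instead approximate $\f$ from above by $g_\delta:=\max(\f,\psi-\delta)$, which has the same singularity type as $\psi$, and observe that plurifine locality on the plurifine open set $\{\f>\psi-\delta\}\supset\{\f=\psi\}$ already gives the key identity $\mathbf{1}_{\{\f=\psi\}}\theta_{g_\delta}^n=\mathbf{1}_{\{\f=\psi\}}\theta_\f^n$ \emph{without any appeal to the locally bounded maximum principle}; the lemma then follows by passing to the weak limit $\theta_{g_\delta}^n\to\theta_\psi^n$. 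The gain is a slicker one-shot argument that never descends to the bounded case. The cost is reliance on a deeper ingredient: weak continuity of the non-pluripolar Monge--Amp\`ere operator along monotone sequences with constant total mass, cf.\ \cite[Thm.\ 2.3]{darvas2018monotonicity}, which the paper does invoke elsewhere (in the proof of Proposition~\ref{prop_imp}) but deliberately avoids here. Since $g_\delta\nearrow\psi$ in capacity and $\int_X\theta_{g_\delta}^n=\int_X\theta_\psi^n$ by monotonicity of masses, that convergence theorem indeed applies, so your argument is complete once you cite it rather than resketching the truncation; as you yourself note, its proof ultimately rests on the same truncation-by-$V_\theta-k$ device that the paper uses directly, so the two arguments are cousins rather than strangers.
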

	\begin{proof} For the reader's convenience, we briefly give proof here. If $\f$ and $\psi$ are locally bounded, the result follows, due to Demailly (see e.g.,~\cite[Theorem 3.23]{guedj2017degenerate}). For general case, set $\f^t:=\max(\f,V_\theta-t)$, $\psi^t:=\max(\psi,V_\theta-t)$. Then $\f^t$ and $\psi^t$ are locally bounded on $\Omega$, it follows that
		\[\mathbf{1}_{\{\f>V_\theta-t\}\cap\{\psi>V_\theta-t \}\cap \{\f^t=\psi^t\}}\theta^n_{\f^t}\leq \mathbf{1}_{\{\f>V_\theta-t\}\cap\{\psi>V_\theta-t \}\cap \{\f^t=\psi^t\}}\theta^n_{\psi^t}, \]
		%multiplying with $\mathbf{1}_{\{\f>V_\theta-t\}\cap\{\psi>V_\theta-t \}}$ both side, and 
		using plurifine locality. Letting $t\to+\infty$, the inequality follows.
		%We refer the reader to~\cite{darvas2020metric}.
	\end{proof}
	\subsection{Quasi-plurisubharmonic Envelopes and Model Potentials}\label{sect: envelopes}
	
	Given a measurable function $h:X\to\mathbb{R}$, we define the {\em $\theta$-psh envelope} of $h$ by
	\begin{equation*}
	P_\theta(h):=(\sup\{u\in\PSH(X,\theta): u\leq h\;\text{on}\, X \})^*,
	\end{equation*} where the star means we take the upper semi-continuous regularization. %We next define 
	%\[ P_{\theta}(g,h):=(\sup\{u\in\PSH(X,\theta): u\leq \min (g,h)\;\text{on}\, X \})^*.\] 
	Given a $\theta$-psh function $\phi$, 
	 Ross and Witt Nystr\"om~\cite{ross2014analytic} introduced the "rooftop envelope"
	\[P_\theta[\phi](h)= \left( \lim_{C\to +\infty}P_\theta(\min(\phi+C,h))\right)^*. \]
	If $h=0$ we simply write $P_\theta[\phi]$. A potential $\phi\in\PSH(X,\theta)$ is called a {\em model potential} if $\int_X\theta_\phi^n>0$ and $\phi=P_\theta[\phi]$.
	
	%We have the following result which has been established in~\cite[Theorem 2.3]{guedj2021quasi2}.
	
	%\begin{theorem}\label{thm: envelope}
	%If $h$ is bounded from below, quasi l.s.c, and  $P_\theta(h)<+\infty$ then
	%\begin{enumerate}
	%   \item $P_\theta(h)$ is a bounded $\theta$-psh function;
	%  \item $P_\theta(h)\leq h$ in $X\setminus P$, for some pluripolar set $P$;
	% \item $(\theta+\dc P_\theta(h))^n$ is concentrated on the contact set $\{P_\theta(h)=h\}$.
	%\end{enumerate}
	%\end{theorem}
	
	\begin{proposition}\label{prop: contact}
		Assume that $h=a\f-b\psi$, where $\f,\psi$ are quasi-psh functions and $a,b$ are positive constants. If $P_\theta(h)\not\equiv -\infty$ then $(\theta+\dc P(h))^n$ is concentrated on the contact set $\{P(h)=h\}$. 
	\end{proposition}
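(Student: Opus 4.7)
The statement is a balayage / contact-set property of $\theta$-psh envelopes: where $u := P_\theta(h)$ stays strictly below the obstacle $h$, its non-pluripolar Monge--Amp\`ere mass must vanish. The plan is to combine a truncation argument with the classical local balayage idea, and to use plurifine locality to handle the fact that $h = a\varphi - b\psi$, as a difference of quasi-psh functions, is neither upper nor lower semi-continuous in the Euclidean sense, even though it is continuous in the plurifine topology off a pluripolar set.

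Setting $D_\delta := \{u < h - \delta\}$, the goal reduces to showing $(\theta + dd^c u)^n(D_\delta) = 0$ for every $\delta > 0$, since $(\theta + dd^c u)^n$ does not charge pluripolar sets and $\{u < h\} = \bigcup_{\delta > 0} D_\delta$ outside the pluripolar locus $\{\varphi = -\infty\} \cup \{\psi = -\infty\}$. First I would truncate: set $\varphi_k := \max(\varphi, -k)$, $\psi_k := \max(\psi, -k)$, $h_k := a\varphi_k - b\psi_k$, and $u_k := P_\theta(h_k)$. Here $h_k$ is bounded and plurifine-continuous, and $u_k$ is bounded $\theta$-psh. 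On the plurifine-open set $\{\varphi > -k\} \cap \{\psi > -k\}$ the obstacles $h_k$ and $h$ agree; combined with plurifine locality of the non-pluripolar Monge--Amp\`ere operator, a concentration statement for each $u_k$ should transfer to a concentration statement for $u$.

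For the bounded obstacle $h_k$, the concentration $(\theta + dd^c u_k)^n(\{u_k < h_k\}) = 0$ follows from a classical Bedford--Taylor style balayage argument: if $B \Subset \textrm{Amp}(\theta) \cap \{u_k < h_k - \delta\}$ is a small coordinate ball with $(\theta + dd^c u_k)^n(B) > 0$, one solves the homogeneous local Dirichlet problem with boundary data $u_k|_{\partial B}$, obtaining a $\theta$-psh candidate $v \geq u_k$ on $B$ with $v$ strictly larger than $u_k$ somewhere. Exploiting plurifine continuity of $h_k$ and standard Choquet-type capacity estimates, one verifies that $v \leq h_k$ on $B$ (after excising a set of arbitrarily small capacity), and extending by $u_k$ off $B$ produces a $\theta$-psh competitor contradicting the maximality of $u_k = P_\theta(h_k)$. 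Once this is established for every $k$, Lemma~\ref{lem: maxprin} together with plurifine locality transports the inequality for $u_k$ into the desired statement for $u$ on the plurifine subset of $D_\delta$ where both $\varphi$ and $\psi$ exceed $-k$; letting $k \to \infty$ exhausts $D_\delta$ up to a pluripolar set.

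The main obstacle is precisely the non-semicontinuity of $h$: existing envelope-concentration results are stated for continuous or quasi-continuous obstacles, not for signed combinations of quasi-psh functions. The resolution is that both $\varphi$ and $\psi$ are continuous in the plurifine topology where they are finite, so $h$ is plurifine-continuous on a plurifine-open set of full measure for $(\theta + dd^c u)^n$. A secondary subtlety is that the truncation $h_k$ does not converge to $h$ monotonically, so one cannot appeal to simple monotone convergence of Monge--Amp\`ere measures; the passage $k \to \infty$ must instead be carried out via plurifine locality on the region where the truncation is inactive, which is the most delicate step of the argument.
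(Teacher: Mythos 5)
Your approach contains a genuine gap at the limit step $k\to\infty$, and it stems from a structural choice that the paper deliberately avoids. You truncate \emph{both} $\varphi$ and $\psi$, so that $h_k = a\varphi_k - b\psi_k$ has no monotonicity in $k$: $a\varphi_k$ decreases while $-b\psi_k$ increases. Consequently the envelopes $u_k = P_\theta(h_k)$ are not monotone in $k$, and you have no mechanism for showing $u_k \to u$ at all, let alone a convergence of the Monge--Amp\`ere measures $\theta_{u_k}^n \to \theta_u^n$. You try to sidestep this by "transporting the inequality via plurifine locality" on $\{\varphi>-k\}\cap\{\psi>-k\}$, but plurifine locality requires the \emph{potentials} $u$ and $u_k$ to agree on a plurifine open set; agreement of the obstacles $h$ and $h_k$ there does not imply agreement of their global envelopes, so there is nothing for plurifine locality to bite on. The cited Lemma~\ref{lem: maxprin} concerns ordered functions on their contact set and does not serve here either.

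The paper's proof hinges on a different, more careful choice: it regularizes \emph{only} $\varphi$, by smooth $\omega$-psh functions $\varphi_j\searrow\varphi$ (Demailly), leaving $\psi$ untouched. This accomplishes two things at once. First, $h_j = a\varphi_j - b\psi$ is \emph{lower semi-continuous} (smooth minus upper semi-continuous), so $\{u_j < h_j\}$ is genuinely Euclidean-open and the classical Bedford--Taylor balayage applies directly, with none of the subtleties you flag for merely plurifine-continuous obstacles. Second, $h_j\searrow h$ monotonically, hence $u_j = P_\theta(h_j)\searrow u = P_\theta(h)$, and one passes to the limit by noting that the uniformly bounded, quasi-continuous functions $\min(h_j-u_j,1)$ converge in capacity to $\min(h-u,1)$, so $\int_X \min(h_j-u_j,1)\,\theta_{u_j}^n = 0$ yields $\int_X \min(h-u,1)\,\theta_u^n = 0$ in the limit. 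Your balayage step for $h_k$ plurifine-continuous but not lower semi-continuous (how to certify $v\le h_k$ after "excising small capacity" and still obtain a valid $\theta$-psh competitor) is also left unresolved, but the decisive missing idea is the asymmetric regularization that preserves both semi-continuity and monotonicity simultaneously.
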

	We note that $h=a\f-b\psi$ is well-defined in the complement of a pluripolar set and by assumption $P_\theta(h)\in\PSH(X,\theta)$. Moreover, $P_\theta(h)\leq a\f-b\psi$ means that $P_\theta(h)+b\psi\leq a\f$ on $X$. A generalized result is proved when $h$ is merely quasi-continuous, c.f.~\cite[Theorem 2.7]{darvas2023relative}.
	\begin{proof}
		It is already well known in some literature~\cite{guedj2019plurisubharmonic,darvas2020metric}. We just sketch the proof here. We assume that $\f$, $\psi$  are $\omega$-psh. Thanks to~\cite{demailly1992regularization}, we can find $\f_j\in \PSH(X,\omega)\cap \mathcal{C}^\infty(X)$ be such that $\f_j\searrow \f$. We set $u_j=P_\theta(a\f_j-b\psi)$ and $u:=P_\theta(h)$, so $u_j\searrow u$ (see~\cite[Prop. 2.2]{guedj2019plurisubharmonic}). Since $a\f_j-b\psi$ is lower semi-continuous, the set $\{u_j<a\f_j-b\psi\}$ is open. It thus follows from a classical balayage argument that for each $j$, $\theta^n_{u_j}$ vanishes on  $\{u_j<a\f_j-b\psi\}$. Also, $u_j\leq a\f_j-b\psi$, hence we have that
		\[\int_X\min (a\f_j-b\psi-u_j,1)\theta_{u_j}^n=0\]
		by~\cite[Prop. 2.5]{guedj2019plurisubharmonic}.
		The functions $\min (a\f_j-b\psi-u_j,1)$ are uniformly bounded, are quasi-continuous, and converge in capacity to $\min (a\f-b\psi-u,1)$ (which is quasi-continuous and bounded on $X$). By letting $j\to+\infty$, the conclusion follows from~\cite[Thm 2.3]{darvas2020metric}.
		%It thus follows from~\cite[Theorem 2.3]{darvas2020metric} that  \[ \]
	\end{proof}

	\begin{proposition}
		\label{prop_imp}
		Let $\f,\psi\in\PSH(X,\theta)$ be such that $\psi$ is more singular than $P_\theta[\f]$. Then for any $b>0$, $P_\omega(b\f-b\psi)$ is a $\omega$-psh function with full Monge--Amp\`ere mass.  
	\end{proposition}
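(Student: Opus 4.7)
The plan is to bound $P_\omega(b\f-b\psi)$ from below by an $\omega$-psh function of full Monge--Amp\`ere mass, and then invoke the stability of $\mathcal{E}(X,\omega)$ under passage to less singular potentials, cf.~\cite[Thm.~2.4]{darvas2018monotonicity}.

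First I would normalize by replacing $\f$ and $\psi$ with $\f-\sup_X\f$ and $\psi-\sup_X\psi$, so that $\sup_X\f=\sup_X\psi=0$; the hypothesis then reads $\psi\le P_\theta[\f]+C_0$ for some $C_0\ge 0$, while $\f\le P_\theta[\f]$ holds automatically since $\f$ is itself a $\theta$-psh candidate in the rooftop construction defining $P_\theta[\f]$. Combining these two inequalities,
\[
b\f-b\psi \;=\; b\bigl(\f-P_\theta[\f]\bigr)+b\bigl(P_\theta[\f]-\psi\bigr) \;\ge\; b\bigl(\f-P_\theta[\f]\bigr)-bC_0,
\]
and by monotonicity of the envelope
\[
P_\omega(b\f-b\psi) \;\ge\; P_\omega\bigl(b(\f-P_\theta[\f])\bigr)-bC_0.
\]
It therefore suffices to prove $u:=P_\omega\bigl(b(\f-P_\theta[\f])\bigr)\in\mathcal{E}(X,\omega)$.

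Next, to ensure $u\not\equiv-\infty$, I would introduce the truncation $\f^s:=\max(\f,P_\theta[\f]-s)\in\PSH(X,\theta)$ for $s>0$: it decreases to $\f$ as $s\to+\infty$ and satisfies $-s\le\f^s-P_\theta[\f]\le 0$. Hence $h_s:=b(\f^s-P_\theta[\f])$ is bounded in $[-bs,0]$, so $P_\omega(h_s)$ is a bounded $\omega$-psh function, and in particular lies in $\mathcal{E}(X,\omega)$. Writing $h:=b(\f-P_\theta[\f])$, one has $h_s\searrow h$, and the standard monotone convergence for envelopes along decreasing families gives $P_\omega(h_s)\searrow u$; in particular $u\not\equiv-\infty$.

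The main obstacle is that a decreasing limit of full-mass potentials need not be of full mass a priori. To bypass this I would invoke the envelope theorem of Guedj--Lu~\cite{guedj2021quasi1,guedj2021quasi}: since $h$ is quasi-continuous (as a difference of two quasi-psh functions) and bounded above by $0$, and $u=P_\omega(h)\not\equiv-\infty$ by the previous step, their result yields $u\in\mathcal{E}(X,\omega)$, which completes the argument.
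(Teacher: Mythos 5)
Your reduction in steps 1--4 is sound, and the truncation $\f^s=\max(\f,P_\theta[\f]-s)$ is essentially the device the paper uses (with $\psi$ in place of $P_\theta[\f]$, which your normalization legitimately replaces). But the argument breaks down at exactly the point where the real work has to happen, in two ways.

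First, the claim ``$P_\omega(h_s)\searrow u$; in particular $u\not\equiv-\infty$'' is a non sequitur. Each $P_\omega(h_s)$ is bounded, but a decreasing sequence of bounded $\omega$-psh functions can perfectly well decrease to $-\infty$ (constants $-s$ do). The whole content of the paper's argument here is a proof by contradiction: one supposes $\sup_X P_\omega(h_s)\to-\infty$, localizes $(\omega+\dc P_\omega(h_s))^n$ on the contact set via Proposition~\ref{prop: contact}, dominates it by a mixed Monge--Amp\`ere measure involving $\f^s$, and uses the mass equality $\int_X\theta_\f^n=\int_X\theta_{P_\theta[\f]}^n$ (a consequence of the standing hypothesis and the monotonicity theorem) to reach a contradiction as $s\to\infty$. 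You have not reproduced any of this, so non-degeneracy is simply asserted.

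Second, the black-box you invoke does not exist. Guedj--Lu prove that the Monge--Amp\`ere measure of $P_\omega(h)$ is concentrated on the contact set when $h$ is quasi-continuous, but they do \emph{not} prove that quasi-continuity of $h$, boundedness from above, and $P_\omega(h)\not\equiv-\infty$ together force $P_\omega(h)\in\mathcal{E}(X,\omega)$. That statement is in fact false: take $h=w$ for some $w\in\PSH(X,\omega)$ with $\sup_X w=0$, zero Lelong numbers everywhere, but $\int_X\omega_w^n<\int_X\omega^n$ (such $w$ exist); then $P_\omega(h)=w\not\equiv-\infty$ yet $w\notin\mathcal{E}(X,\omega)$. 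The full-mass conclusion genuinely uses the special structure of $h=b(\f-P_\theta[\f])$ — again through the contact-set domination and the mass monotonicity — and this is the content of the step the paper delegates to \cite[Prop.~2.10]{dang2022continuity}. So both the non-degeneracy and the full-mass parts of the proposition remain unproved in your proposal.
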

	\begin{proof}
		We adapt the argument in~\cite[Prop. 2.10]{dang2022continuity} which goes back to~\cite{darvas2020metric}.  We assume that $\theta\leq A\omega$ for some $A>0$. For each $j\in\mathbb{N}$ we set $\f_j:=\max(\f,\psi-j)$ and $u_j:=P_\omega(b\f_j-b\psi)$. Then $(u_j)$ is a decreasing sequence of $\omega$-psh functions, and  $u_j\geq -jb$. 
		We will show that $\lim_j u_j$ is not identically $-\infty$. We let for each $j$, $D_j:=\{u_j=b\f_j-b\psi\}$ denote the contact set. Observe that the  $D_j$'s are non-empty for $j$ large enough. Since $u_j+b\psi\leq b\f_j$
		it follows from the maximum principle and Proposition~\ref{prop: contact} that
		\[\omega_{u_j}^n\leq \mathbf{1}_{D_j} [\omega+\dc u_j+b(A\omega+\dc \psi)]^n\leq \mathbf{1}_{D_j}((Ab+1)\omega+\dc b\f_j)^n\]  
		Set $\Tilde{\omega}:=\left( \frac{1}{b}+A\right)\omega$. Fix $t>0$. Since $\f_j=\f$ on $\{\f>V_\theta-t/b\}$ for $j>t/b$, by plurifine locality we have for $j>t/b$, 
		\[\int_{\{\f\leq \psi-t/b\}}\Tilde{\omega}_{u_j}^n=\int_X\Tilde{\omega}_{\f_j}^n-\int_{\{\f>\psi-t/b \}}\Tilde{\omega}_{\f}^n.\]
		We see that  $$\{u_j\leq -t\}\cap D_j=\{\f_j\leq \psi-t/b\}\subset\{\f\leq \psi-t/b\}.$$
		From these things above, we have that
		\begin{equation}\label{eq: ineq1}
		\begin{split}
		\omega_{u_j}^n(u_j\leq -t)&\leq \mathbf{1}_{D_j} b^n\Tilde{\omega}_{\f_j}^n(u_j\leq -t)\\
		&\leq  b^n\Tilde{\omega}_{\f_j}^n(\f\leq \psi-t/b)\\
		&\leq b^n\left(\int_X\Tilde{\omega}_{\f_j}^n-\int_{\{\f>\psi-t/b \}}\Tilde{\omega}_{\f}^n \right).
		\end{split}
		\end{equation}
		Suppose by contradiction that $\sup_X u_j\rightarrow-\infty$ as $j\to+\infty$. It thus follows that $\{u_j\leq -t\}=X$ for $j$ large enough, $t$ being fixed. Hence, for $j>0$ large enough, \eqref{eq: ineq1} becomes
		\begin{equation*}
		\int_X\omega^n\leq b^n\left(\int_X\Tilde{\omega}_{\f_j}^n-\int_{\{\f>\psi-t/b \}}\Tilde{\omega}_{\f}^n \right).
		\end{equation*} %Notice that $\psi_j\geq -jb$, hence $\psi_j$ has full Monge--Amp\`ere mass.   
		Letting $j\rightarrow +\infty$, we obtain
		\begin{equation}\label{ineq: contradict}
		\int_X\omega^n\leq b^n\left(\int_X\Tilde{\omega}_{\f_j}^n-\int_{\{\f>\psi-t/b \}}\Tilde{\omega}_{\f}^n \right),
		\end{equation}
		where we have used that $$\Tilde{\omega}_{\f_j}^n=\sum_{k=0}^n\binom{n}{k}(\Tilde{\omega}-\theta)^k\wedge\theta_{\f_j}^{n-k}\to \Tilde{\omega}_{\f}^n,$$ in the weak sense of measures on $X$, thanks to \cite[Thm. 2.3, Rmk. 2.5]{darvas2018monotonicity}. 
		Indeed, since $\psi$ is more singular than $P_\theta[\f]$ hence $\f\leq \f_j\leq P_\theta[\f]$. By monotonicity, for $k=0,1,\ldots,n$, $$\int_X(\Tilde{\omega}-\theta)^k\wedge\theta_{\f}^{n-k}=\int_X(\Tilde{\omega}-\theta)^k\wedge\theta_{\f_j}^{n-k}=\int_X(\Tilde{\omega}-\theta)^k\wedge\theta_{P_\theta[\f]}^{n-k}.$$ 
		Finally, letting $t\rightarrow+\infty$ in \eqref{ineq: contradict} 
		we obtain a contradiction. Consequently,  $u_j$ decreases to a $\omega$-psh function, we infer that $P_\omega(b\f-b\psi)$ is a $\omega$-psh function for any $b>0$. 
		
		We proceed the same as~\cite[Prop. 2.10]{dang2022continuity} to obtain that $P_\omega(b\f-b\psi)\in \mathcal{E}(X,\omega)$.
	\end{proof}
	
	%\subsection{Demailly's equisingular approximation}
	%We next recall   the basic result on the approximation of quasi-functions by quasi-functions with analytic singularities. We refer the reader to \cite{demailly1992regularization,demailly2015cohomology} and references therein. 

	%Thanks to $\dc$-Lemma, the problem of approximating a positive closed $(1,1)$-current is reduced to approximating a quasi-psh function. 
	%The following result of Demailly \cite{demailly1992regularization,demailly2015cohomology} on the  equisingular approximation  of a quasi-psh function by quasi-psh functions with analytic singularities is crucial:
	
	%\begin{theorem}[Demailly's equisingular approximation]\label{thm: dem} 
	%Let $\f$ be a $\theta$-psh function on $X$. There exists a decreasing sequence of quasi-psh functions $(\f_m)$ such that
	%\begin{enumerate}
	%\item $(\f_m)$ converges pointwise and in $L^1(X)$ to $\f$ as $m\to+\infty$, 
	%\item $\f_m$ has the same singularities as  $1/2m$ times a logarithm of a sum of squares of holomorphic functions,
	% \item $\theta+\dc\f_m\geq -\varepsilon_m\omega$, where $\varepsilon_m>0$ decreases to 0 as $m\to+\infty$,
	%  \item $\int_Xe^{2m(\f_m-\f)}dV<+\infty$;
	%   \item $\f_m$ is smooth outside the analytic subset $E_{1/m}(\f)$.
	%\end{enumerate}
	%\end{theorem}
	%\begin{proof}
	%The proof can be found in \cite[Theorem 1.6, Lemma 1.10]{demailly2015cohomology}. 	
	%\end{proof}

	\section{Proof of the Main Theorem}
	According to~\cite{boucksom2004divisorial} we can find $\chi$  a quasi-psh function with analytic singularities such that \[\theta+\dc\chi\geq 2\delta_0\omega,\] for some $\delta_0>0$ and moreover $\textrm{Amp}(\theta)=X\setminus\{\chi=-\infty\}$. Let $\phi$ be a model type singularity, i.e., $\phi=P_\theta[\phi]$. We assume in this section that $\phi$ is less singular than $\chi$. We emphasize that the assumptions on $\phi$ are truly natural, for instance, $\phi=V_\theta$ or $\phi=P_\theta[\chi]$, as described in~\cite[Remark 1.6]{darvas2018monotonicity}. 
	
	Given a non-negative Radon measure $\mu$ whose total mass is $\int_X\theta^n_\phi>0$,
	we consider the  Monge--Amp\`ere equation
	\begin{equation}\label{cmae}
	\theta^n_\f=\mu.
	\end{equation}
	The systematic study of such equations with prescribed singularities in big cohomology classes has been initiated in \cite{darvas2018monotonicity,darvas2021log}. It has been shown there that~\eqref{cmae} admits a unique normalized solution $\f\in \mathcal{E}(X,\theta,\phi)$ if and only if $\mu$ is {\em non-pluripolar}, i.e., it
	puts no mass on pluripolar subsets.   The characterization of solutions belonging to weighted subspace was proved in \cite{do2020complex,darvas2023relative}.
	
	Our goal  is to prove the following: 
	
	\begin{theorem}\label{thm: mainthm} Assume $\phi$ is as above.
		Let $\f\in\mathcal{E}(X,\theta,\phi)$ be normalized by $\sup_X\f=0$. Assume that $$\theta^n_\f\leq e^{-u}gdV,$$ for some quasi-psh function $u$ on $X$, and $0\leq g\in L^p(dV)$, with $p>1$. 
		Assume that $u$ is locally bounded on an open set $U\subset {\rm Amp}(\theta)$. Then $\f $ is continuous on $U$.
	\end{theorem}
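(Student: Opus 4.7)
\emph{Step 1 (Boundedness on $U$).} Since $\chi$ has analytic singularities, it is smooth on $\mathrm{Amp}(\theta)$, hence locally bounded on $U$. Combined with $\chi \preceq \phi \preceq V_\theta$ and the local boundedness of $V_\theta$ on $\mathrm{Amp}(\theta)$, the model potential $\phi$ is locally bounded on $U$. As $\f \in \mathcal{E}(X,\theta,\phi)$ implies $\f \leq \phi + C$, $\f$ is bounded from above on $U$. Upper semi-continuity of $\f$ being automatic, continuity on $U$ reduces to lower semi-continuity. The overall plan is to approximate $\f$ uniformly on $U$ by continuous solutions of regularized problems, with Proposition~\ref{prop_imp} serving as the bridge that controls the (a priori unbounded) difference of two solutions with prescribed singularities.

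\emph{Step 2 (Regularized solutions).} Truncate the data: $u_k := \max(u,-k)$ (globally bounded quasi-psh) and $g_k := \min(g,k) \in L^\infty(X)$; then normalize $\mu_k := c_k e^{-u_k} g_k\,dV$ so that $\mu_k(X) = \int_X \theta_\phi^n$, with $c_k \to 1$. By Darvas--DiNezza--Lu there is a unique $\f_k \in \mathcal{E}(X,\theta,\phi)$ with $\sup_X \f_k = 0$ solving $\theta_{\f_k}^n = \mu_k$. With the density now bounded, continuity of $\f_k$ on $\mathrm{Amp}(\theta)$ should follow by extending the results of~\cite{di2017complex,dang2022continuity} from minimal to general model singularities, again via the Guedj--Lu envelope technique.

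\emph{Step 3 (Uniform convergence on $U$).} The heart of the proof is to show $\f_k \to \f$ uniformly on $U$. For each $b > 0$, Proposition~\ref{prop_imp} applied to $(\f_k, \f)$ gives $w_k := P_\omega(b\f_k - b\f) \in \mathcal{E}(X,\omega)$, and symmetrically $\tilde w_k := P_\omega(b\f - b\f_k) \in \mathcal{E}(X,\omega)$. By Proposition~\ref{prop: contact}, $\omega_{w_k}^n$ is concentrated on the contact set $\{w_k = b\f_k - b\f\}$, where $\theta_\f^n \leq e^{-u} g\,dV$ provides a quantitative bound on the Monge--Amp\`ere mass. Combining this with H\"older's inequality (using $g \in L^p$) and Skoda-type integrability of $e^{-u}$ near $U$ (valid since $u$ is locally bounded on $U$, forcing all Lelong numbers of $u$ to vanish there) one obtains a comparison between $\sup_X w_k$ and $\|\mu_k - \mu\|$. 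Together with $w_k \leq b(\f_k - \f)$ globally and the symmetric estimate for $\tilde w_k$, this yields $\|\f_k - \f\|_{L^\infty(U)} \to 0$, so $\f$ is continuous on $U$ as a uniform limit of continuous functions.

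The main obstacle is the stability estimate in Step 3: in the minimal-singularities setting of~\cite{dang2022continuity} the solutions are bounded on $\mathrm{Amp}(\theta)$ and one exploits global capacity comparisons directly, whereas here $\f$ may be genuinely singular, and the role of a global $L^\infty$ bound must be played by the $\omega$-psh function $w_k$. Its full-mass property (Proposition~\ref{prop_imp}) is precisely what permits the Guedj--Lu/Ko\l{}odziej machinery to be applied to $w_k$ in place of the unbounded raw difference $\f_k - \f$; the $L^p$ integrability of $g$ together with the Skoda input on $u$ on $U$ are exactly the ingredients needed to make the resulting capacity/volume comparison quantitative.
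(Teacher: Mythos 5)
Your proposal diverges significantly from the paper's argument and, as written, contains genuine gaps at each of its three steps.

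\emph{Step 1.} You only establish that $\f$ is bounded \emph{above} on $U$ (which is immediate from $\f\leq 0$), and note that continuity reduces to lower semi-continuity. But the substantial content of the paper's Step 1 is the lower bound $\f\geq \p-A$ with $\p:=\chi+au$ (where $a>0$ makes $au$ a $\delta_0\om$-psh function), and this requires the full machinery of Proposition~\ref{prop_imp} (to see that $P_{\delta_0\om}(\f-\p)$ is $\om$-psh with full mass), Proposition~\ref{prop: contact} plus Lemma~\ref{lem: maxprin} (to transfer the density bound onto the contact set), and a Ko{\l}odziej-type $L^\infty$ estimate. You defer this entirely to the later approximation, but the later steps also depend on knowing $\f$ is bounded below on $U$; the dependency cannot be untangled this way.

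\emph{Step 2.} This step is circular. You truncate $u_k:=\max(u,-k)$, $g_k:=\min(g,k)$, solve $\theta_{\f_k}^n=\mu_k$ in $\mathcal{E}(X,\theta,\phi)$, and then assert that ``continuity of $\f_k$ on $\mathrm{Amp}(\theta)$ should follow by extending the results of \cite{di2017complex,dang2022continuity} from minimal to general model singularities.'' That extension is precisely the content of Theorem~\ref{thm: mainthm}; the cited references treat only minimal singularities, and nothing about having a bounded density makes the prescribed-singularities issue easier. You are invoking a special case of the theorem being proved. The paper avoids this entirely: its Step 2 produces a decreasing sequence of \emph{envelopes} $\f_j:=P_\theta[\chi](h_j)$, where $h_j$ are smooth functions decreasing to $\f$, and continuity of each $\f_j$ on $\mathrm{Amp}(\theta)$ comes from an envelope-regularity theorem (McCleerey), not from an equation.

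\emph{Step 3.} Your stability argument is never made concrete. You set $w_k:=P_\om(b\f_k-b\f)$, observe that $\om_{w_k}^n$ is concentrated on the contact set, and then claim a comparison between $\sup_X w_k$ and $\|\mu_k-\mu\|$ ``using H\"older's inequality and Skoda-type integrability.'' But $\f_k$ and $\f$ are two different solutions, and on the contact set of $w_k$ there is no direct way to bound $\om_{w_k}^n$ by the data $\theta_\f^n\leq e^{-u}g\,dV$: the Monge--Amp\`ere operator is fully nonlinear, so differences of solutions do not satisfy any useful equation, and the envelope $w_k$ does not inherit a pointwise density bound from the data of $\f$ alone. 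The paper's Step 3 is structured quite differently: it builds auxiliary potentials $u_{j,\lambda}$ solving an equation with a density $g_{j,\lambda}e^{-P_\om(b\f-b\phi)}$ that tends to $0$ in $L^r$, forms the perturbation $v_{j,\lambda}=(1-\lambda)\f_j+\lambda(\p+u_{j,\lambda})-C\lambda$, and applies the comparison and domination principles to get $\f\geq v_{j,\lambda}$. That computation crucially uses the equation for $\f$ itself, not a comparison of two solutions. To salvage your approach you would need a genuine quantitative $L^\infty$-stability theorem for Monge--Amp\`ere equations with prescribed singularities on the ample locus, which is not available off the shelf and whose proof would essentially reproduce the paper's Step~3.

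In short: you correctly identify Proposition~\ref{prop_imp} as a key tool and the need for a Ko{\l}odziej/Guedj--Lu-type mechanism adapted to the envelope $P_\om(b\f-b\psi)$, but the lower bound in Step 1 is missing, Step 2 is circular, and Step 3 lacks the specific perturbation argument that makes the paper's proof close.
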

	
	For a quasi-psh function $u$ and $c>0$ we set 
	\[E_c(u):=\{x\in X, \nu(u,x)\geq c\},\] where $\nu(u,x)$ denotes the Lelong number of $u$ at $x$. A well-known result of Siu asserts that the Lelong super-level sets
	$E_c(\f)$ are closed analytic subsets of $X$.
	
	As a consequence of Theorem~\ref{thm: mainthm}  we obtain the following theorem, which implies our theorem in the introductory part.
	\begin{theorem}
		\label{thm2} Assume $\phi$ is as above.
		Assume $\nu=gdV$ to be a Radon measure, with $0\leq g\in L^p(dV)$ for some $p>1$. Let $\mu$ be a non-pluripolar positive measure 
		such that $\mu(X)=\int_X\theta_{\phi}^n$.
		Assume that $\mu=fd\nu$, with $f\leq e^{-u}$ for some quasi-psh function $u$ on $X$. 
		Let $\f\in\mathcal{E}(X,\theta,\phi)$ be the unique normalized solution to \eqref{cmae}. Then
		$\f$ is continuous on $\textrm{Amp}(\theta){\setminus}
		 E_{1/q}(u)$, where $q$ denotes the conjugate exponent of $p$.
	\end{theorem}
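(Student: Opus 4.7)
The plan is to derive Theorem~\ref{thm2} from Theorem~\ref{thm: mainthm} by modifying the quasi-psh weight $u$ so that the hypotheses of Theorem~\ref{thm: mainthm} become satisfied. Since continuity is a local property, it suffices to prove that $\f$ is continuous on every relatively compact open set $U\Subset\textrm{Amp}(\theta)\setminus E_{1/q}(u)$; fix one such $U$. By Siu's analyticity theorem (invoked in the paragraph following Theorem~\ref{thm: mainthm}), $E_{1/q}(u)$ is a closed analytic subset of $X$, so one may choose a quasi-psh function $\psi$ on $X$ with analytic singularities precisely along $E_{1/q}(u)$. In particular $\psi$ is smooth, hence bounded, on a neighborhood of $\bar U$.

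For parameters $\eta, C>0$ to be determined, set $\tilde u := \max(u, \eta\psi - C)$ and $\tilde g := e^{\tilde u - u}g$. Then $\tilde u$ is quasi-psh; the boundedness of $\psi$ near $\bar U$ forces $\tilde u$ to be locally bounded on $U$; and by construction $\theta_\f^n \leq e^{-u}g\,dV = e^{-\tilde u}\tilde g\,dV$. The task thus reduces to proving $\tilde g \in L^{p'}(X,dV)$ for some $p'>1$. On the contact set $\{u \geq \eta\psi - C\}$, $\tilde g = g$ is in $L^{p'}$ for $p'\leq p$. On the complementary set, H\"older's inequality with exponents $p/p'$ and $p/(p-p')$ reduces the $L^{p'}$-bound to the finiteness of $\int_X e^{pp'(\eta\psi - u)/(p-p')}\,dV$.

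As $p'\to 1^+$, the effective exponent $pp'/(p-p')$ tends to $q$. Near a point of $E_{1/q}(u)$, in local transverse coordinates $z_1$, $\psi$ behaves like $\log|z_1|$ while $-u$ is bounded above by a constant plus $\nu(u,\cdot)\,|\log|z_1||$; the integrand therefore behaves like a power $|z_1|^{pp'(\eta - \nu(u,\cdot))/(p-p')}$, which is locally integrable on transverse complex disks provided this exponent exceeds $-2$, i.e.\ $\eta > \nu(u,\cdot) - 2(p-p')/(pp')$. Choosing $\eta$ larger than $\sup_X \nu(u,\cdot) - 2/q$ plus a safety margin therefore handles the singularities uniformly on $E_{1/q}(u)$. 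Away from $E_{1/q}(u)$, $\psi$ is bounded and the Skoda--Zeriahi integrability of $e^{-\alpha u}$ for $\alpha$ slightly exceeding $q$ applies because $\nu(u,\cdot)<1/q$ there; combining these local estimates via a finite covering argument yields $\tilde g\in L^{p'}(X)$.

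Theorem~\ref{thm: mainthm} applied to the triple $(\tilde u, \tilde g, p')$ then delivers continuity of $\f$ on $U$; since $U$ was arbitrary, $\f$ is continuous on all of $\textrm{Amp}(\theta)\setminus E_{1/q}(u)$. The main obstacle is the global $L^{p'}$ integrability of $\tilde g$: one must balance the size of $\eta$ (large enough that the analytic pole of $\eta\psi$ dominates the pole of $u$ along $E_{1/q}(u)$) against the choice of $p'$ (close enough to $1$ that the H\"older exponent $pp'/(p-p')$ stays below the Skoda threshold dictated by the Lelong numbers of $u$). Verifying that a single pair $(\eta,p')$ works globally, on both sides of $E_{1/q}(u)$, is the technical heart of the reduction.
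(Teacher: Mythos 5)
Your overall strategy --- replace $u$ by $\tilde u=\max(u,\eta\psi-C)$ with $\psi$ having analytic singularities along $E_{1/q}(u)$, absorb the discrepancy into the density $\tilde g=e^{\tilde u-u}g$, and then invoke Theorem~\ref{thm: mainthm} --- is reasonable, and the reduction to the finiteness of $\int_X e^{\alpha(\eta\psi-u)}\,dV$ with $\alpha=pp'/(p-p')$ correctly isolates the crux. But the key estimate you use to establish this integrability near $E_{1/q}(u)$ is false. You write that ``$-u$ is bounded above by a constant plus $\nu(u,\cdot)\,|\log|z_1||$.'' The Lelong number gives exactly the opposite inequality: from $\nu(u,x_0)=\liminf_{z\to x_0} u(z)/\log|z-x_0|$ one gets, for every $\epsilon>0$, the \emph{upper} bound $u(z)\le(\nu(u,x_0)-\epsilon)\log|z-x_0|$ near $x_0$, i.e.\ a \emph{lower} bound $-u\ge(\nu-\epsilon)|\log|z-x_0||$. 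There is no matching upper bound on $-u$ in terms of $\nu$: for instance $u(z)=\log|z|-\log\log(1/|z|)$ has $\nu(u,0)=1$ yet $-u-|\log|z||=\log\log(1/|z|)\to+\infty$. Consequently the claimed pointwise comparison $e^{\alpha(\eta\psi-u)}\lesssim|z_1|^{\alpha(\eta-\nu)}$ does not hold, and the subsequent derivation of the threshold $\eta>\sup_X\nu(u,\cdot)-2/q$ is unsupported. The integrability you want is in fact governed not by Lelong numbers alone but by the multiplier ideal $\mathcal{I}(\alpha u)$; one needs (via Nadel coherence and a Nullstellensatz-type argument on the compact $X$, or equivalently via Demailly's equisingular approximation of $u$ by quasi-psh functions with analytic singularities) that $\mathcal{J}^m\subset\mathcal{I}(\alpha u/2)$ for $m$ large, where $\mathcal{J}$ is the coherent ideal sheaf with $V(\mathcal{J})=E_{1/q}(u)$. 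In higher dimension the integrability threshold is not a function of the Lelong number alone (Skoda's theorem leaves a gap between $1$ and $n$), so your explicit bound on $\eta$ cannot be salvaged as stated.

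This is precisely why the paper's proof goes through Demailly's equisingular approximation theorem and refers to~\cite[Theorem~3.1]{dang2022continuity}: the approximation produces a sequence $u_m\searrow u$ with analytic singularities whose multiplier ideals coincide with those of $u$ for $m$ large, which is exactly the sheaf-theoretic control your pointwise argument is missing. Your proposal circumvents this tool but does not replace it with a correct justification, so the ``technical heart of the reduction'' that you yourself flag remains unproved.
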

	\begin{proof}
		The proof relies on Demailly's equisingular approximation theorem~\cite{demailly2015cohomology}. The same arguments as in~\cite[Theorem 3.1]{dang2022continuity} completes the proof.
	\end{proof}
	\begin{proof}[Proof of Theorem~\ref{thm: mainthm}]
		The proof relies on the quasi-psh envelope technique developed in~\cite{guedj2021quasi1}. This can also be applied to give an alternative proof for~\cite[Theorem 3.1]{dang2022continuity}. 
		We divide the proof into several steps.
		
		\medskip
		\noindent\textbf{Step 1.} {\it We prove that $\f$ is locally bounded on $U$.} 
		
		We pick $a>0$ so that $au$ is $\delta_0\omega$-psh. Set $\psi:=\chi+au$. We thus have $$\theta+\dc\psi\geq \delta_0\omega,$$ and $\psi$ is locally bounded on $U$. We also obtain $\psi\leq \phi+au$. We claim that
		\[\f\geq \psi-A, \]
		for a positive constant $A$ only depending  on $\delta_0$, $p$, $dV$,  and  $\int_Xe^{-P_\omega(a^{-1}\f-a^{-1}\phi)}gdV$. 
		\smallskip 
		
		%We first treat the case that $\f$ is less singular than $\psi$. 
		Set $\Tilde{\f}:=P_{\delta_0\omega}(\f-\psi)$. By Proposition~\ref{prop_imp}, $\Tilde{\f}$ is an $\omega$-psh function since $\psi$ is more singular than $P_\theta[\f]=\phi$ (cf.~\cite[Theorem 1.3]{darvas2018monotonicity}).
		One can show that $\sup_X\Tilde{\f}$ is uniformly bounded from above by applying the argument in~\cite{guedj2017degenerate}.  %We for instance, refer to~\cite[Theorem 3.3]{guedj2021quasi1}.
		Without loss of generality, we can normalize $\sup_X\psi=0$. The set $G:=\{\psi>-1\}$ is nonempty plurifine open set. We observe that $\Tilde{\f}(x)\leq (\f-\psi)(x)\leq 1$ for $x\in G$, hence \[\Tilde{\f}(x)-1\leq V_{G,\omega}:=\sup \{u\in \PSH(X,\omega): u|_G\leq 0\}.\]
		By~\cite[Thm . 9.17.1]{guedj2017degenerate} we have that $\sup_X V_{G,\omega}<+\infty$ since $G$ is non-pluripolar, hence $\sup_X\Tilde{\f}\leq 1+\sup_XV_{G,\omega}<+\infty$. 
		
		Since $\f-\psi$ is bounded from below and quasi-continuous, it follows from  Proposition~\ref{prop: contact} that the Monge--Amp\`ere measure $(\delta_0\omega+\dc\Tilde{\f})^n$ is concentrated on the contact set $D:=\{ \Tilde{\f}+\psi=\f\}$. We thus get
		\[ (\delta_0\omega+\dc\Tilde{\f})^n\leq \mathbf{1}_D(\theta+\dc (\Tilde{\f}+\psi))^n.\]
		We also observe that $\Tilde{\f}+\psi$ is an $\theta$-psh function such that $\Tilde{\f}+\psi\leq \f$ on $X$. It follows from Lemma~\ref{lem: maxprin} that
		\[\mathbf{1}_D(\theta+\dc (\Tilde{\f}+\psi))^n\leq \mathbf{1}_D(\theta+\dc \f)^n. \]
		Therefore, we have
		\begin{equation*}
		\begin{split}
		(\delta_0\omega+\dc\Tilde{\f})^n&\leq   \mathbf{1}_D(\theta+\dc \f)^n\\
		&\leq \mathbf{1}_D e^{-u}dgV\\
		&= \mathbf{1}_D e^{a^{-1}\Tilde{\f}} e^{-a^{-1}(\f-\chi)} gdV\\
		&\leq \mathbf{1}_D e^{a^{-1}\sup_X\Tilde{\f}} e^{-P_\omega(a^{-1}\f-a^{-1}\phi)}   gdV.
		\end{split}
		\end{equation*}
		It follows from the H\"older inequality that $ e^{-P_\omega(a^{-1}\f-a^{-1}\phi)}g$ belongs to $L^{r}(X,dV)$ for some $r\in (1,p)$. By Ko{\l}odziej's estimate~\cite{kolodziej1998complex} (see also~\cite{guedj2021quasi1} for alternative one), we infer that $\Tilde{\f}\geq -A$ is uniformly bounded from below. This proves our claim.
		
		%In general case, we work with $\f^t:=\max(\f,\psi-t)$ instead, for $t>0$ big enough. Note that the contact set $D_t=\{\Tilde{\f}+\psi=\f\}\subset\{\psi-t\leq \f\}$, hence $\mathbf{1}_{D_t}\theta_{\f^t}^n=\mathbf{1}_{D_t}\theta_{\f}^n$ by plurifine locality.
		\medskip

		\noindent\textbf{Step 2.} {\it There exists a sequence of functions $\f_j\in\PSH(X,\theta)\cap \mathcal{C}^0(\textrm{Amp}(\theta))$ such that $\phi\geq \f_j$ decreases to $\f$.}
		
		\smallskip
		
		For convenience, we normalize $\f$ so that $\sup_X\f=-1$. Let $0\geq h_j$ be  a sequence of smooth functions  decreasing to $\f$. Then the sequence of $\theta$-psh functions $\f_j:=P_\theta[\chi](h_j)$ decreases to $\f$ as $j\to +\infty$. 
		Indeed, since the operator $P_\theta$ is monotone, hence the sequence $\f_j$ is decreasing to a $\theta$-psh function $v$. Note that $[\phi]\geq [\f]$ hence $\f$ is a candidate defining $\f_j$ we thus have $\f_j\geq \f$ for all $j$ so $v\geq \f$. Moreover,  $v(x)\leq\f_j(x)\leq h_j(x),\forall\; x\in X$, for all $j$, hence $v(x)\leq \f(x)$,
		as claimed.
		
		Moreover, it follows from~\cite[Theorem 1.1]{McCleerey2020envelopes} that $\f_j$ is continuous outside the singular locus of $\chi$, hence on $\textrm{Amp}(\theta)$.
		%Furthermore, we have that $\f_j$ is  continuous  in $\textrm{Amp}(\alpha)$ for each $j$. In fact, \cite[Theorem 3.8]{darvas2018monotonicity} gives an upper bound on the Monge-Amp\`ere measure of $\f_j$:
		%\begin{equation*}
		%   \theta_{\f_j}^n\leq \mathbf{1}_{\{ \f_j=h_j\}}\theta_{h_j}^n.
		%\end{equation*}
		%The equality also holds following to~\cite[Corollary 3.4.]{di2021monge}. In particular $\theta_{\f_j}^n$ has a bounded density, hence it follows from~\cite[Theorem D]{demailly2014holder} that $\f_j$ is H\"older continuous in $\textrm{Amp}(\theta)$, as claimed.

		\medskip
		
		\noindent\textbf{Step 3.} {\it  Continuity of solutions.} We finally adapt the arguments in~\cite[Theorem 3.7]{guedj2021quasi} to prove the continuity of the solution $\f$ on $U$.  
		\smallskip
		
		%By Demailly's regularization theorem, there exists a sequence of smooth functions $(\f_j)$ such that $\theta+\dc\f_j\geq -2^{-j}\omega$ and $\f_j\searrow \f$. 
		Fix $\lambda\in (0,1)$. Since  Proposition~\ref{prop_imp} ensures that for any $b>0$, $P_{\omega}(b\f-b\phi)$ has zero Lelong numbers everywhere on $X$, 
		it follows from the H\"older inequality that $ge^{-P_\omega(b\f-b\phi)}\in L^r(dV)$ for some $r>1$. It was shown (see e.g.,~\cite{kolodziej1998complex,eyssidieux2009singular}) that there exists a bounded $\delta_0\omega$-psh function solving 
		\[(\delta_0\omega+\dc u_{j,\lambda})^n=e^{u_{j,\lambda}}(g_{j,\lambda}e^{-P_{\omega}(b\f-b\phi)}+h)dV,\] where $h=(\delta_0\omega)^n/dV$, $g_{j,\lambda}=\lambda^{-n}\mathbf{1}_{\{\f<\f_j-\lambda\}}g$. 
		Moreover, $g_{j,\lambda}e^{-P_\omega(b\f-b\phi)}\to 0$ in $L^r$. It thus follows from the stability property (see e.g.,~\cite{guedj2018stability}) that $u_{j,\lambda}$ uniformly converges to 0 as $j\to +\infty$. We consider
		\[v_{j,\lambda}:=(1-\lambda)\f_j +\lambda(\psi+u_{j,\lambda} ) -C\lambda,\]
		for  $C>0$ to be chosen hereafter. We observe that  $v_{j,\lambda}\leq \phi+\lambda au$ since $\chi\leq \phi$ and $\f_j\leq \phi$ by Step 2.
		%Since $\theta+\dc\psi\geq \delta_0\omega$ hence $v_{j,\lambda}$ is $\theta$-psh for $j$ so large that $2^{-j}\leq \frac{\lambda\delta_0}{2(1-\lambda)}$.
		We compute
		\begin{equation}\label{eq: cont1}
		(\theta+\dc v_{j,\lambda})^n\geq e^{u_{j,\lambda}}\mathbf{1}_{\{\f<\f_j-\lambda\}}ge^{-P_\omega(b\f-b\phi)}dV.
		\end{equation}
		By previous step, we have $\f_j\geq \f\geq \psi-A$ for a positive constant $A$. Hence on the set $\{\f<v_{j,\lambda}\}$,
		\[\f<\f_j-\lambda(\f_j-\psi)+\lambda\sup_X|u_{j,\lambda}|-C\lambda\leq \f_j-\lambda, \] where we have chosen $C>1+\sup_X|u_{j,\lambda}|+A$. Moreover on this set we have
		\begin{equation}\label{eq: cont2}
		\begin{split}
		e^{u_{j,\lambda}}ge^{-P_\omega(b\f-b\phi)}&\geq e^{-\sup_X|u_{j,\lambda}|}g e^{-b\f+b\phi} \\
		&\geq e^{-\sup_X|u_{j,\lambda}|}g e^{-bv_{j,\lambda}+b\phi} \\
		&\geq ge^{Ca^{-1}-u},
		\end{split} 
		\end{equation} since $v_{j,\lambda}\leq \phi+\lambda au$
		with $b=(\lambda a)^{-1}$. Therefore, it follows from~\eqref{eq: cont1} and~\eqref{eq: cont2} that
		\[ e^{Ca^{-1}}(\theta+\dc\f)^n\leq (\theta+\dc v_{j,\lambda})^n\leq (\theta+\dc\max(\f,v_{j,\lambda}))^n, \]
		on the set $\{ \f<v_{j,\lambda}\}$. Since $\f\in\mathcal{E}(X,\theta,\phi)$ we can apply the comparison principle (see~\cite[Lemma 2.3]{darvas2020metric}) to get
		\[e^{Ca^{-1}}\int_{\{ \f<v_{j,\lambda}\}}\theta_\f^n\leq \int_{\{ \f<v_{j,\lambda}\}} \theta^n_{\max(\f,v_{j,\lambda})}\leq \int_{\{ \f<v_{j,\lambda}\}}\theta_\f^n.\] It thus follows that $\f\geq \max(\f,v_{j,\lambda})\geq v_{j,\lambda}$ a.e. with respect to the measure $\theta_\f^n$, hence everywhere by the domination principle (see~\cite[Proposition 3.11]{darvas2018monotonicity}). Letting $j\to +\infty$ we obtain
		\[ \liminf_{j\to+\infty} \inf_K(\f-\f_j)\geq -(\sup_K|\psi|+C)\lambda\]
		for any compact set $K\Subset U$. Letting $\lambda\to 0$ we have that $\f_j\to \f$ uniformly on $K$. This completes the proof.
	\end{proof}
	
	\begin{proof}[Proof of Theorem~\ref{mainthm}] We can provide a direct proof of Theorem~\ref{mainthm}, a particular case in Theorem~\ref{thm: mainthm}. Since $\chi$ has analytic singularities there exists a coherent ideal sheaf $\mathcal{J}$ such that on open set $V\subset X$,  $\mathcal{J}$ is generated by holomorphic functions $(f_1,\ldots,f_N)$ and $\chi=c\log\sum|f_j|^2+g$ for $c>0$ and some smooth function $g$ defined locally on $V$. By Hironaka's resolution theorem, 
		there exists 	a composition of blowups with
		smooth centers $\pi:\tilde{X}\to X$ such that $\pi^*\mathcal{J}=\mathcal{O}_{\tilde{X}}(-D)$  for an effective divisor $D=\sum c_jD_j$, $c_j>0$ and by Siu's decomposition, we have
		\[\pi^*(\theta+\dc \chi)=\tilde{\theta}+[D], \] where $\tilde{\theta}$ is a semi-positive closed (1,1) form, $[D]$ denotes the current of integration along $D$. Moreover the map $\pi$ gives a biholomorphic between $\pi^{-1}\textrm{Amp}(\theta)$ and $\textrm{Amp}(\theta)$. So if $\f$ is a $\theta$-psh function of $X$ and satisfies $\f\preceq\chi$ then there exists $\tilde{\f}\in\PSH(\tilde{X},\tilde{\theta})$ such that $\sup_{\tilde{X}}\tilde{u}=\sup_X(u-{P_\theta[\chi]})$ and $\pi^*(\theta_\f)=\tilde{\theta}_{\tilde{\f}}+[D]$. Vice-versa if $\tilde{\f}$ is $\tilde{\theta}$-psh then its restriction on $\pi^{-1}\textrm{Amp}(\theta)$ is of the form $(\f-\chi)\circ \pi$ for some $\theta$-psh function $\f$ on $\textrm{Amp}(\theta)$, such function can be extended to all of $X$.  
		Observe that by definition of the non-pluripolar product, $\pi_*\tilde{\theta}_{\tilde{\f}}^n=\theta_\f^n$ we obtain an 1-1 corresponding between $\mathcal{E}(X,\theta,P_\theta[\chi])$ and $\mathcal{E}(\tilde{X},\tilde{\theta})$. It thus leads to the following equation
		$$\Tilde{\theta}_{\tilde{\f}}^n=e^{-u\circ \pi}\tilde{g}dV_{\tilde{X}},\quad\sup_{\tilde{X}}\tilde{\f}=0,$$
		where $\tilde{g}=\pi^*(gdV_X)/dV_{\tilde{X}}\in L^r$ for some $r>1$ (it has zeros along the exceptional divisor). By the main theorem in \cite{dang2022continuity}, we know that $\tilde{\f}$ is continuous outside $E_{nK}(\tilde{\theta})\cup  E_{1/q}(u\circ \pi)$, where $E_{nK}(\tilde{\theta})$ is the non-K\"ahler locus (cf. \cite[Def. 3.16]{boucksom2004divisorial}). We observe that $$\pi\left( E_{nK}(\tilde{\theta})\cup  E_{1/q}(u\circ \pi)\right)\subset E_{nK}({\theta})\cup \textrm{Sing}(\chi)\cup  E_{1/q}(u),$$ this completes the proof.
	\end{proof}

	\section{Pluripotential Monge--Amp\`ere Flows Through Prescribed Singularities}\label{sect: parabolic}
	%The results obtained in the previous section allow us to obtain analogous ones in the parabolic counterpart.
	We investigate in this section the following complex Monge--Amp\`ere flow
	\begin{align*}\label{cmaf} \tag{CMAF}
	{\rm d}t\wedge(\omega_t+\dc\f_t)^n=e^{\dot{\f}_t+F(t,\cdot,\f_t)}fdV\wedge {\rm d}t
	\end{align*}
	on $X_T$, where 
	\begin{itemize}
		\item $X_T:=(0,T)\times X$ with $T<+\infty$;
		\item $0\leq f\in L^p(X,dV)$ for some $p>1$, and $f>0$ almost everywhere;
		
		\item  $(\omega_t)_{t\in[0,T)}$ is a smooth family of closed $(1,1)$-forms on $X$ such that
		$$g(t)\theta\leq\omega_t,\quad
		\forall\, t\in [0,T),$$ 
		with $g(t)$  an increasing smooth positive function on $[0,T]$.
		
		\item $F:[0,T]\times X\times \R\rightarrow\R$ is  continuous on $[0,T]\times X\times \mathbb{R}$, increasing in $r$ and
		is uniformly Lipschitz, convex in $(t,r)\in[0,T]\times \mathbb{R}$, 
		%\item the function $(t,r)\mapsto F(t,\cdot,r)$ is convex.
		
		\item $\f:[0,T)\times X\to\mathbb{R}$ is the unknown function, with $\f_t=\f(t, \cdot)$.
	\end{itemize}
	
	\smallskip
	
	We consider an $\theta$-psh function $\phi$ as previous, i.e., $\phi$ is a model type and is less singular than $\chi$. 
	We recall that from \cite{darvas2018monotonicity,darvas2021log}
	there exists an $\theta$-psh function $\rho_\phi$ normalized by $\sup_X\rho_\phi=0$  such that
	\[(\theta+\dc\rho_\phi)^n=2^ne^{c_1}fdV,\quad [\rho_\phi]=[\phi], \] where $c_1$ is the normalizing constant such that $\int_X\theta_\phi^n=\int_X 2^ne^{c_1}fdV$. %Moreover, there exists a constant $M>0$ only depending on $\theta$, $dV$, $p$, and $\int_X\theta^n_\phi$ such that 
	%		\begin{align*}
	%		\rho_\phi\geq \phi-M\|f\|_p^{1/n}.
	%		\end{align*}
	
	%\begin{lemma}\label{lem_initial}
	%	For every $\lambda\in [(1+\delta_0g(0))^{-1},1]$,
	%	the function $\lambda g(0)(\rho_\rho+\chi)/2$ is $\omega_0$-psh.
	%\end{lemma}
	%\begin{proof}
	%   The proof is quite the same as that of~\cite[Lemma 1.18]{dang2022pluripotential}.
	%\end{proof}
	Let now $\f_0$ be a $\omega_0$-psh function which is less singular than $ g(0)(\rho_\rho+\chi)/2$. Then there exists a constant $C_0>0$ such that \begin{align*}
	g(0)\frac{\rho_\phi+\chi}{2}-C_0\leq \f_0.
	\end{align*}
	
	%As in~\cite{dang2021pluripotential} we define the space of {\em parabolic potentials}.
	\begin{definition}\label{def_pp}
		The set $\mathcal{P}(X_T,\omega)$  of {\em parabolic potentials} consists of functions $\f: X_T\rightarrow[-\infty,+\infty)$ such that
		\begin{itemize}
			\item $\f$ is upper semi-continuous on $X_T$ and $\f\in L^1_{\rm loc}(X_T)$;
			\item for each $t\in (0,T)$ fixed, the slice $\f_t:x\mapsto\f(t,x)$ is $\omega_t$-psh on $X$;
			\item  
			for any compact subinterval $J\subset(0,T)$, there exists a positive constant $\kappa=\kappa_{J}(\f)$ such that 
			\begin{align}\label{famlip}
			\partial_t \f \leq \kappa-\kappa(\rho_\phi+\chi),
			\end{align}
			in the sense of distributions on $J\times \Omega$.
		\end{itemize}
	\end{definition}

	\begin{definition}
		We say that a parabolic potential $\f\in \mathcal{P}(X_T,\omega)$ is a {\em pluripotential subsolution} to~\eqref{cmaf} on $X_T$ if
		\begin{itemize}
			\item for each $t\in (0,T)$ fixed, the $\omega_t$-psh function $\f(t,\cdot)$ is locally bounded in $\Omega$
			\item the inequality 
			\begin{align*}
			(\omega_t+\dc\f_t)^n\wedge {\rm d}t\geq e^{\dot{\f}_t+F(t,\cdot,\f_t)}fdV\wedge {\rm d}t
			\end{align*}
			holds in the sense of measures in $(0,T)\times \Omega$.
		\end{itemize}
	\end{definition}
	
	\begin{definition}\label{defcp}
		A Cauchy datum for~\eqref{cmaf} is a $\omega_{0}$-psh function $\f_0:X\rightarrow\R$ as above.
		We say $\f \in \mathcal{P}(X_T,\omega)$ is a subsolution to the Cauchy problem:
		\begin{align*}
		(\omega_t+\dc u_t)^n=e^{\partial_{t}u_t+F(t,\cdot,u_t)}fdV,\quad  u\big|_{\{0\}\times X}=\f_0
		\end{align*}
		if $\f$ is a pluripotential subsolution  to \eqref{cmaf} such that
		$\limsup_{t\rightarrow 0}\f(t,x)\leq\f_0(x)$ for all $x\in X$.
		
		We let $\mathcal{S}_{\f_0,f,F}(X_T)$ denote the set of pluripotential subsolutions to the Cauchy problem above.		
	\end{definition}
	
	\begin{lemma}\label{exist_sub}
		The set $\mathcal{S}_{\f_0,f,F}(X_T)$ is non-empty, uniformly bounded from above on $X_T$, and stable under finite maxima.
	\end{lemma}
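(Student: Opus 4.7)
The plan is to treat the three assertions of Lemma~\ref{exist_sub} in order: non-emptiness by an explicit construction, the uniform upper bound by an energy/a priori estimate (backed up by an explicit supersolution if needed), and stability under finite maxima by a plurifine-locality computation combined with Lemma~\ref{lem: maxprin}. The main obstacle will be the uniform upper bound, since a single majorant must dominate every member of $\mathcal{S}_{\f_0,f,F}(X_T)$ simultaneously.

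For non-emptiness I would take the candidate
\[
\underline{\f}(t,x) := \tfrac{g(t)}{2}\bigl(\rho_\phi(x)+\chi(x)\bigr) - C_0 - Kt,
\]
with $K>0$ to be chosen large. Writing
\[
\omega_t + \dc \underline{\f}_t = \bigl(\omega_t - g(t)\theta\bigr) + \tfrac{g(t)}{2}(\theta + \dc\rho_\phi) + \tfrac{g(t)}{2}(\theta + \dc\chi)
\]
exhibits the left-hand side as a sum of three closed positive currents, which makes $\underline{\f}_t$ an $\omega_t$-psh function and yields
\[
(\omega_t + \dc\underline{\f}_t)^n \;\geq\; \bigl(\tfrac{g(t)}{2}(\theta+\dc\rho_\phi)\bigr)^n \;=\; g(t)^n e^{c_1} f\, dV
\]
by the normalization of $\rho_\phi$. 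Since $\partial_t \underline{\f} = \tfrac{g'(t)}{2}(\rho_\phi+\chi) - K \leq -K$ (using $g'\geq 0$ and $\rho_\phi+\chi\leq 0$, which can be arranged by shifting $C_0$) and since $F(t,x,\underline{\f}) \leq F(t,x,-C_0)$ is uniformly bounded above by monotonicity of $F$ in $r$, picking $K$ large enough in terms of $F$, $c_1$ and $\inf_{[0,T]} g>0$ yields the subsolution inequality. The initial-trace, local boundedness on $\Omega$, and time-derivative conditions of Definitions~\ref{def_pp}--\ref{defcp} then follow at once from the corresponding properties of $\rho_\phi$ and $\chi$.

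For the uniform upper bound I would first attempt an energy argument. Integrating the subsolution inequality over $X$ and using $\int_X (\omega_t + \dc\f_t)^n \leq \int_X \omega_t^n$ together with the Lipschitz lower bound on $F$ gives $\int_X e^{\dot\f_t} f\, dV \leq C$ uniformly in $t$. Jensen's inequality applied to the probability measure $f\,dV/\!\int f\,dV$ then turns this into an upper bound on $\tfrac{d}{dt}\int_X \f_t f\,dV$, which integrates to a uniform control on the weighted mean $\int_X \f_t f\,dV$; a routine application of the sub-mean inequality for $\omega_t$-psh functions (which holds uniformly in $t$ because the family $\omega_t$ is smooth) then converts this into a uniform upper bound on $\sup_X \f_t$, and hence on $\f$. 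If this route proves too unclean due to possible vanishing of $f$, the robust alternative is to build an explicit parabolic supersolution of the form $\bar\f(t,x) := V_{\omega_t}(x) + A + Bt$ for suitable constants $A, B$ and to apply a parabolic comparison principle in the spirit of~\cite{dang2022pluripotential}.

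For stability under finite maxima let $\f^1, \f^2 \in \mathcal{S}_{\f_0,f,F}(X_T)$ and set $\f := \max(\f^1, \f^2)$. Slicewise $\omega_t$-pshness and the time-derivative bound with $\kappa = \max(\kappa_1,\kappa_2)$ pass to the max immediately, the Cauchy condition persists because $\limsup_{t\to 0}\f = \max(\limsup\f^1,\limsup\f^2) \leq \f_0$, and local boundedness on $\Omega$ is inherited. On the plurifine open sets $\{\f^1 > \f^2\}$ and $\{\f^2 > \f^1\}$, plurifine locality of the non-pluripolar Monge--Amp\`ere measure reduces the subsolution inequality for $\f$ to that for the corresponding $\f^j$. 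On the contact set $\{\f^1 = \f^2\}$, Lemma~\ref{lem: maxprin} yields $\mathbf{1}_{\{\f^j = \f\}}(\omega_t + \dc \f^j_t)^n \leq \mathbf{1}_{\{\f^j = \f\}}(\omega_t + \dc \f_t)^n$; choosing at each point the index $j$ realising $\dot\f = \max(\dot\f^1,\dot\f^2)$ and using the monotonicity of $F$ in its last argument recovers the subsolution inequality for $\f$ there as well.
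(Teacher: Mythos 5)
Your non-emptiness construction matches the paper's exactly: the paper also takes $\underline{u}(t,x)=g(t)\tfrac{\rho_\phi(x)+\chi(x)}{2}-C_1(t+1)$ (deferring the verification to~\cite[Lemma 2.2]{dang2022pluripotential}), and your decomposition of $\omega_t+\dc\underline{\f}_t$ into three nonnegative currents and the resulting lower bound $\bigl(\tfrac{g(t)}{2}(\theta+\dc\rho_\phi)\bigr)^n = g(t)^n e^{c_1} f\,dV$ is the right computation. The stability-under-max step via plurifine locality is also the standard argument and is essentially correct, modulo the usual care needed about the a.e.\ sense in which $\dot\f=\dot\f^j$ on the contact set.

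The genuine gap is in your \emph{primary} argument for the uniform upper bound. From $\int_X(\omega_t+\dc\f_t)^n\leq\int_X\omega_t^n$ and the subsolution inequality you want to deduce $\int_X e^{\dot\f_t}f\,dV\leq C$, but this requires a \emph{uniform lower bound} on $F(t,\cdot,\f_t)$; the ``Lipschitz lower bound on $F$'' only gives $F(t,x,\f_t)\geq -C-L|\f_t(x)|$, and a general element of $\mathcal{S}_{\f_0,f,F}(X_T)$ carries no lower bound on $\f_t$ whatsoever (subsolutions may blow down to $-\infty$ arbitrarily on pluripolar sets, and indeed the envelope is later taken only over those pinched between $\underline{u}$ and $M_0$ precisely because of this). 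So the first step of your chain does not close. The last step is also problematic: the sub-mean inequality converts control of $\int_X\f_t\,dV$ into control of $\sup_X\f_t$, but you have (at best) control of the $f$-weighted mean $\int_X\f_t f\,dV$, and the hypotheses only give $f\geq 0$ with $f>0$ a.e.\ (not $f\geq c>0$), so the weighted mean does not dominate $\sup_X\f_t$. Your fallback --- an explicit parabolic supersolution plus a comparison principle --- is the right idea in spirit, but as written it is circular at this point in the development: the parabolic comparison/domination machinery (Theorem~\ref{thm_unique}, \cite[Prop.~3.4, Thm.~3.7]{dang2022pluripotential}) is established \emph{after} Lemma~\ref{exist_sub} and in fact uses the a priori bound $M_0$ through the definition of $U$. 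A self-contained proof of the upper bound has to be more elementary (e.g.\ a direct maximum-principle / penalization argument for the one-sided comparison with a fixed barrier, or the slicewise elliptic domination as in \cite[Lemma 2.2]{dang2022pluripotential}); you should spell out one such argument rather than appeal to the general comparison principle.
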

	\begin{proof}
		We  proceed the same  as in~\cite[Lemma 2.2]{dang2022pluripotential} to obtain a subsolution
		\begin{equation}\label{eq: subsol underline u}
		\underline{u}(t,x):=g(t)\frac{\rho_\phi(x)+\chi(x)}{2}-C_1(t+1),
		\end{equation} and all subsolutions are uniformly bounded by $M_0$.
	\end{proof}
	\begin{definition}
		We let 
		\begin{align*}
		U=U_{\f_0,f,F,X_T}:=\sup\{ \f\in\mathcal{S}_{\f_0,f,F}(X_T): \underline{u}\leq \f\leq M_0 \}
		\end{align*}
		denote the upper envelope of all subsolutions. 
	\end{definition}
	In the same vein, we obtain the regularity in time $t$ of the Perron upper envelope.
	\begin{theorem}\label{thmlips}
		There exists uniform constants $L_U>0$, $C_U>0$ such that for  %$(t,x)\in X_T$,
		\begin{align}\label{lips}
		t|\partial_tU(t,x)|\leq L_U-L_U(\rho_\phi(x)+\chi(x));
		\end{align}
		\begin{align}\label{loc}
		t^2\partial_t^2 U(t,x)\leq C_U-C_U(\rho_\phi(x)+\chi(x)),
		\end{align}
		in the sense of distributions in $X_T$.
	\end{theorem}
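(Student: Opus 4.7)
The plan is to exploit the maximality of the Perron envelope $U$ via a time-rescaling argument, in the spirit of the corresponding result in~\cite{dang2022pluripotential}. Both estimates \eqref{lips} and \eqref{loc} will follow by constructing, for each small $\varepsilon > 0$, a perturbation of $U$ that remains a pluripotential subsolution on a reparametrized time interval and then invoking maximality to compare it with $U$ itself.

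\textbf{Lipschitz bound \eqref{lips}.} Fix $\varepsilon > 0$ small and consider the candidate
\[ v_\varepsilon(t, x) := (1-\varepsilon)\,U\!\left(\tfrac{t}{1-\varepsilon}, x\right) + \varepsilon\,\underline{u}(t, x) + K\varepsilon(\rho_\phi(x) + \chi(x)) - C_K\varepsilon, \]
where $\underline u$ is the explicit subsolution from~\eqref{eq: subsol underline u} and $K, C_K > 0$ are to be chosen. Using the positivity $\theta + dd^c\!\left(\tfrac{\rho_\phi + \chi}{2}\right) \geq \delta_0\omega$, the smoothness of $t \mapsto \omega_t$, the convexity of $F$ in $(t, r)$, and the multilinearity of the Monge--Ampère operator (together with the Brunn--Minkowski inequality $((1-\varepsilon)T_1 + \varepsilon T_2)^n \geq (T_1^n)^{1-\varepsilon}(T_2^n)^\varepsilon$), one verifies that $v_\varepsilon \in \mathcal{S}_{\f_0, f, F}(X_{T(1-\varepsilon)})$ provided $K$ is taken large depending on $\delta_0$, $\sup_{[0,T]}|\partial_t\omega_t|$, $\mathrm{Lip}(F)$ and $g'$. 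Maximality $v_\varepsilon \leq U$ then yields, after rearranging,
\[ U\!\left(\tfrac{t}{1-\varepsilon}, x\right) - U(t, x) \leq \tfrac{\varepsilon}{1-\varepsilon}\bigl(U(t,x) - \underline u(t,x) - K(\rho_\phi + \chi) + C_K\bigr). \]
Dividing by $\tfrac{\varepsilon t}{1-\varepsilon}$ and letting $\varepsilon \to 0^+$, and absorbing the uniformly bounded quantity $U - \underline u + C_K$ into the final constant, one obtains $t\,\partial_t U \leq L_U - L_U(\rho_\phi + \chi)$. The opposite inequality $-t\,\partial_t U \leq L_U - L_U(\rho_\phi + \chi)$ is obtained by performing the reverse rescaling $t \mapsto t/(1+\varepsilon)$ with the analogous correction.

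\textbf{Semi-convexity bound \eqref{loc}.} For the second-order estimate I would apply a Kiselman-type symmetric-difference argument. For $s > 0$ small, set
\[ w_s(t, x) := \tfrac{1}{2}U\bigl((1+s)t, x\bigr) + \tfrac{1}{2}U\bigl((1-s)t, x\bigr) + Cs^2(\rho_\phi + \chi) - C's^2, \]
and check, using again convexity of $F$ in $(t, r)$ and the concavity of $\log\det$ applied to the two forms $(\omega_{(1\pm s)t} + dd^c U((1\pm s)t,\cdot))$, that $w_s$ is also a pluripotential subsolution for suitable $C, C'$. By maximality $w_s \leq U$, and a Taylor expansion $\tfrac{1}{2}U((1+s)t) + \tfrac{1}{2}U((1-s)t) - U(t) = \tfrac{1}{2}s^2 t^2 \partial_t^2 U(t) + o(s^2)$ produces the claimed bound $t^2 \partial_t^2 U \leq C_U - C_U(\rho_\phi + \chi)$ in the sense of distributions.

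\textbf{Main obstacle.} The delicate step will be the verification that $v_\varepsilon$ (and $w_s$) actually satisfies the Monge--Ampère inequality in the subsolution sense on the open set $(0,T) \times \mathrm{Amp}(\theta)$. The error $\omega_t - \omega_{t/(1-\varepsilon)}$, which is of order $\varepsilon$ but can be of either sign, must be absorbed by the strict Kähler positivity $K\varepsilon(\theta + dd^c\chi) \geq 2K\varepsilon\delta_0\omega$ brought in by the $\chi$-correction; and the discrepancy between $F\bigl((1-\varepsilon)^{-1}t, \cdot, U\bigr)$ and $F(t, \cdot, v_\varepsilon)$ must be controlled via the convex/Lipschitz hypothesis on $F$, which is precisely where the constants $L_U, C_U$ must be calibrated in terms of $\delta_0, \mathrm{Lip}(F), g'$ and $\sup_{[0,T]}|\partial_t \omega_t|$.
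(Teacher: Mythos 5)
The paper gives no proof of Theorem~\ref{thmlips} in the text: it simply says ``in the same vein'' and defers to~\cite{dang2022pluripotential}, where the time--rescaling/maximality argument you propose is indeed the technique used. Your overall plan is therefore the right one; the issue is in the precise form of your barrier functions.

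For the Lipschitz bound your candidate
\[
v_\e(t,x)=(1-\e)\,U\!\Big(\tfrac{t}{1-\e},x\Big)+\e\,\underline{u}(t,x)+K\e\,(\rho_\phi+\chi)-C_K\e
\]
already contains, in the term $\e\,\underline{u}$, all the strict positivity you need: by the construction of $\underline u$ one has $\omega_t+\dc\underline u_t\ge g(t)\delta_0\omega$, so the convex combination $(1-\e)[\omega_{t/(1-\e)}+\dc U(\tfrac{t}{1-\e},\cdot)]+\e[\omega_t+\dc\underline u_t]$ already dominates $\e\,g(t)\delta_0\omega$, which absorbs the $O(\e)$ form error $(1-\e)\big(\omega_{t/(1-\e)}-\omega_t\big)$. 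The additional term $K\e(\rho_\phi+\chi)$ with $K>0$ is not only unnecessary but counterproductive: adding $K\e\chi$ to a function adds $K\e\,\dc\chi\ge -K\e\theta+2K\e\delta_0\omega$ to its Hessian, and the piece $-K\e\theta$ is \emph{not} nonnegative, so you cannot claim it ``brings in strict Kähler positivity $K\e(\theta+\dc\chi)\ge 2K\e\delta_0\omega$''. That inequality involves the form $\theta$, which must be borrowed from the background; this is exactly what the convex-combination structure does, and what a bare additive correction does not. Take $K=0$ and the first half of your argument becomes correct.

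The same confusion is fatal in your second barrier. Your
\[
w_s(t,x)=\tfrac12 U\big((1+s)t,x\big)+\tfrac12 U\big((1-s)t,x\big)+Cs^2(\rho_\phi+\chi)-C's^2
\]
is a convex combination of the two translates of $U$ with weights summing to~$1$, so \emph{no} strict positivity is left over to absorb the $O(s^2t^2)$ error in $\omega_t-\tfrac12\omega_{(1+s)t}-\tfrac12\omega_{(1-s)t}$. The added term $Cs^2(\rho_\phi+\chi)$ makes things worse, not better: since $\dc(\rho_\phi+\chi)\ge -2\theta+2\delta_0\omega$ is not a nonnegative current, $w_s$ is in general \emph{not} $\omega_t$-psh, hence not an admissible subsolution, and no choice of $C,C'$ repairs this. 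The correct candidate mimics the first-order one: take
\[
\widetilde{w}_s(t,x)=(1-cs^2)\Big[\tfrac12 U\big((1+s)t,x\big)+\tfrac12 U\big((1-s)t,x\big)\Big]+cs^2\,\underline{u}(t,x)-h(s),
\]
so that the $cs^2\,\underline{u}$ term provides a positivity margin $cs^2\,g(t)\delta_0\omega$ of the right order $s^2$. With that correction, your Taylor expansion and the maximality inequality $\widetilde w_s\le U$ give~\eqref{loc} exactly as you describe. The Kiselman-type symmetric-difference idea and the use of concavity of $\log\det$ and convexity of $F$ are all fine; only the source of positivity in the barrier was misattributed.
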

	
	\begin{theorem}\label{existence}
		The upper envelope $U:=U_{\f_0,f,F,X_T}$ is a pluripotential solution to the Cauchy problem for the parabolic complex Monge--Amp\`ere equation \eqref{cmaf} in $X_T$. Moreover, $U$ is locally uniformly semi-concave in $(0,T)\times \Omega$.
	\end{theorem}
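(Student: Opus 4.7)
The plan is to adapt the Perron envelope method from \cite{dang2022pluripotential} to the prescribed-singularities setting, with the newly established Theorem~\ref{thm: mainthm} as the regularity input that makes the balayage step go through.

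First I would verify that $U$ itself lies in $\mathcal{S}_{\f_0,f,F}(X_T)$. The family is nonempty, uniformly bounded above by $M_0$, and stable under finite maxima by Lemma~\ref{exist_sub}. A standard Choquet-type argument shows that the upper semi-continuous regularization $U^*$ has $\omega_t$-psh slices; the Lipschitz-in-time bound~\eqref{famlip} is preserved under suprema, and the subsolution inequality passes to the limit because $F$ is convex in $(t,r)$. Sandwiching $\underline{u} \leq U \leq M_0$ via~\eqref{eq: subsol underline u} then places $U$ in $\mathcal{S}_{\f_0,f,F}(X_T)$, and Theorem~\ref{thmlips} supplies the bounds~\eqref{lips}--\eqref{loc}.

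Second, for the semi-concavity statement, the time-direction bound is exactly~\eqref{loc}. For the mixed and spatial directions I would mimic the sup-convolution/balayage argument of \cite[Sect.~4]{dang2022pluripotential}, using the uniform $L^\infty$ bounds on $U$ and the one-sided Lipschitz bound on $\partial_t U$ to extract quantitative Hessian estimates on compact subsets of $(0,T)\times\Omega$; the singular weight $\rho_\phi+\chi$ only enters through the compact-set constants.

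The principal step, which I expect to be the hardest, is to promote $U$ from subsolution to solution. I would argue by contradiction: suppose the Monge--Amp\`ere inequality were strict on a parabolic box $I \times B \Subset (0,T)\times\Omega$. After a preliminary time-regularization $U^\varepsilon$ (needed because $\partial_t U$ is only BV in $t$) one solves on each slice the elliptic Monge--Amp\`ere equation $(\omega_t + \dc v_t)^n = e^{\partial_t U^\varepsilon_t + F(t,\cdot, U^\varepsilon_t)} f\, dV$ with boundary values $v_t = U_t$ on $\partial B$; the solvability uses \cite{darvas2018monotonicity}, and Theorem~\ref{thm: mainthm} applied slice-wise (after subtracting the model envelope $\phi$ to reduce to the ample locus) delivers the continuity of each $v_t$ on $B$ together with a uniform modulus of continuity in $t$, since the right-hand side has locally bounded density in the slice direction. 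Replacing $U$ by $\max(U,v)$ on $I\times B$ then produces a strictly larger element of $\mathcal{S}_{\f_0,f,F}(X_T)$, contradicting the definition of $U$. The Cauchy condition $\lim_{t\to 0^+} U(t,x) = \f_0(x)$ follows by sandwiching $U$ between $\underline{u}$ and an upper barrier of the form $\f_0 + Ct$ for $C$ sufficiently large. The crux throughout is the slice-wise continuity and uniform $t$-regularity of the local perturbation $v$: maintaining the singularity type $[\phi]$ (automatic from $\phi = P_\theta[\phi]$ and comparison) and propagating Theorem~\ref{thm: mainthm} with controlled dependence on the parameter $t$ is precisely where the main obstacle lies and where the continuity result of the present paper is indispensable.
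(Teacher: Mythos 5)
Your overall scaffolding---Perron envelope, stability under maxima, semi-concavity from Theorem~\ref{thmlips}, balayage to upgrade from subsolution to solution---matches the paper's route, which simply points to the balayage argument of \cite[Thm.~3.1]{dang2022pluripotential}. However, there is a genuine conceptual gap in the balayage step that would derail the argument as you have written it.

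The problem is a category error in the regularity input. You propose to solve, on each time-slice, a Dirichlet problem on a ball $B\Subset\Omega$ with boundary data $U_t|_{\partial B}$, and you cite \cite{darvas2018monotonicity} for solvability and invoke Theorem~\ref{thm: mainthm} ``slice-wise'' to get continuity of the local perturbation $v_t$. Both references are \emph{global} statements on the compact manifold $X$ for potentials in $\mathcal{E}(X,\theta,\phi)$; neither applies to a local Dirichlet problem on a ball. More importantly, they are not needed: on $B\Subset\Omega$ the potential $\chi$ (and hence every subsolution slice, which by Definition~4.3 is locally bounded on $\Omega$) is bounded, so the local Dirichlet problem has bounded boundary data and $L^p$ density, and its solvability and interior continuity are classical local facts (Ko{\l}odziej, Bedford--Taylor). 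The prescribed singularity type $[\phi]$ plays no role inside $B$. Consequently, your closing claim that Theorem~\ref{thm: mainthm} is ``indispensable'' for proving Theorem~\ref{existence} is incorrect; in the paper, the global continuity result enters only afterwards, in Theorem~\ref{thm: sing}, to establish continuity of the global slices $U_t$ on $\Omega$---a different statement, which is logically downstream of and not a prerequisite for the balayage of Theorem~\ref{existence}.

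A smaller point: the Cauchy condition at $t=0$ needs a lower barrier converging to $\f_0$ (not merely $\underline{u}(0,\cdot)$, which is only $\leq\f_0$), so the ``sandwich'' must be set up more carefully than stated, e.g.\ via a perturbed subsolution of the form $(1-\varepsilon)\f_0+\varepsilon\underline{u}_t-C_\varepsilon t$ following the scheme in \cite{dang2022pluripotential}. Correcting the source of local regularity (replace Theorem~\ref{thm: mainthm} and \cite{darvas2018monotonicity} by classical local Dirichlet theory on balls in $\Omega$) and tightening the initial-time barrier would bring your argument in line with the intended proof.
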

	\begin{proof}
		The proof relies on a balayage process. We proceed the same as in~\cite[Theorem 3.1]{dang2022pluripotential}.
	\end{proof}
	\begin{theorem}
		\label{thm:  sing}
		% The envelope $U$ is less singular than $\phi$.  
		For each $t\in (0,T)$, $U_t$ is  continuous in $\Omega$.
	\end{theorem}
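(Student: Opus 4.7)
The plan is to view each time slice $U_{t_0}$, for fixed $t_0 \in (0,T)$, as a solution of an elliptic Monge--Amp\`ere equation of the type treated in Theorem~\ref{thm: mainthm}, and to invoke that continuity result. Since $U$ is a pluripotential solution by Theorem~\ref{existence},
\begin{equation*}
(\omega_{t_0} + \dc U_{t_0})^n = e^{\partial_t U(t_0,\cdot) + F(t_0,\cdot, U_{t_0})} f \, dV
\end{equation*}
holds in the pluripotential sense on $X$. The form $\omega_{t_0}$ is big, since $\omega_{t_0} \geq g(t_0)\theta$ with $g(t_0) > 0$, and its ample locus contains $\Omega = \textrm{Amp}(\theta)$; we therefore work with $\omega_{t_0}$ in place of $\theta$.

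The second step is to rewrite the density in the form $e^{-v} \cdot (\text{an } L^p \text{ function})$ with $v$ quasi-psh and locally bounded on $\Omega$. After normalizing $\rho_\phi, \chi \leq 0$, the Lipschitz estimate~\eqref{lips} at $t=t_0$ gives
\begin{equation*}
\partial_t U(t_0, x) \leq \frac{L_U}{t_0}\bigl(1 - \rho_\phi(x) - \chi(x)\bigr),
\end{equation*}
so setting $v := \frac{L_U}{t_0}(\rho_\phi + \chi)$ and using the boundedness of $F$ on compact subintervals yields
\begin{equation*}
(\omega_{t_0})^n_{U_{t_0}} \leq C(t_0)\, f \, e^{-v} \, dV.
\end{equation*}
Now $v$ is a positive multiple of a sum of $\theta$-psh functions, hence quasi-psh; and since $\phi \succeq \chi$ implies $\rho_\phi \simeq \phi$ is locally bounded on $\Omega$ (while $\chi$ is locally bounded on $\Omega$ by definition), $v$ is locally bounded on $\Omega$. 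The hypothesis $f \in L^p$ is given, so the density is of the required type.

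To conclude, I would invoke Theorem~\ref{thm: mainthm} with $\omega_{t_0}$ in place of $\theta$. This requires $U_{t_0} \in \mathcal{E}(X, \omega_{t_0}, \phi_{t_0})$ for some model potential $\phi_{t_0}$ less singular than $\chi$. The natural candidate is the model representative of $[g(t_0)(\rho_\phi + \chi)/2]$: the subsolution~\eqref{eq: subsol underline u} together with the uniform upper bound $M_0$ pin down the singularity type of $U_{t_0}$ to this class, and the inequality $(\rho_\phi + \chi)/2 \geq \chi - C/2$ (coming from $\rho_\phi \geq \chi - C$) shows it is less singular than $\chi$. The main obstacle I expect is precisely this relative full-mass membership for the Perron envelope $U_{t_0}$: it has to be argued either by a direct comparison principle against $\underline{u}$ and the least singular potential of the class, or by quoting the analogous analysis from~\cite{dang2022pluripotential}. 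Once it is in hand, Theorem~\ref{thm: mainthm} applies and yields continuity of $U_{t_0}$ on $\Omega$, completing the proof.
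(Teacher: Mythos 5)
Your approach is essentially the one the paper intends: reduce the continuity of each time slice $U_{t_0}$ to the elliptic Theorem~\ref{thm: mainthm} by using the Lipschitz estimate~\eqref{lips} to recast the density $e^{\partial_t U(t_0,\cdot)+F(t_0,\cdot,U_{t_0})}f$ in the form $e^{-v}\cdot(\text{$L^p$ function})$ with $v$ a quasi-psh function locally bounded on $\Omega$. Your verification that $v=\tfrac{L_U}{t_0}(\rho_\phi+\chi)$ is quasi-psh and locally bounded on $\Omega$ (using $\phi\succeq\chi$) is correct and is exactly the input needed for Step~3 of Theorem~\ref{thm: mainthm}.

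There is one difference worth noting, plus the gap you yourself flag. The paper does not invoke Theorem~\ref{thm: mainthm} wholesale: it observes that the subsolution $\underline{u}$ already gives the lower bound $U_t\geq g(t)\chi-C(t)$, and then imports only Steps~2 and~3 of that proof. Thus the paper bypasses Step~1 entirely (no need to re-derive the lower bound from the density via Ko{\l}odziej's estimate), whereas your formulation routes through Step~1 as well, which is a little redundant but not wrong. The ``main obstacle'' you identify — establishing $U_{t_0}\in\mathcal{E}(X,\omega_{t_0},\phi_{t_0})$ for a model potential $\phi_{t_0}$ less singular than $\chi$ — is genuinely necessary, since Step~3 relies on the comparison and domination principles which assume relative full mass. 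The paper's proof is silent on this point, leaning on the existence theorem (Theorem~\ref{existence}) and its reference to~\cite{dang2022pluripotential}, where the singularity type of $U_t$ is pinned down during the Perron construction. So your proposal is correct in route and honest about the missing step; to close it you should verify, as part of the parabolic comparison principle used to build $U$, that the slices $U_t$ inherit the singularity type $[g(t)(\rho_\phi+\chi)/2]$ and the corresponding full-mass property, rather than leaving it as an open obstacle.
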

	\begin{proof}
		We have seen that \[U_t\geq g(t)\frac{\rho_\phi+\chi}{2}-C_1(t+1)\geq g(t)\chi-C(t)\]
		for $C(t)$ a positive constant only depending on $t$. Next, we  proceed as in Step 2, 3 in Theorem~\ref{thm: mainthm}. This completes the proof. 
	\end{proof}
	We can obtain the uniqueness result.
	\begin{theorem}\label{thm_unique} 
		Let $\Phi$ be a pluripotential solution to the Cauchy problem  for \eqref{cmaf} with initial data $\f_0$. 
		Assume that
		\begin{itemize}
			\item $\Phi$ is locally uniformly semi-concave in $(0,T)$;
			\item for each $t$, $\Phi_t$ and $U_t$ have the same singularities;
			\item $\Phi_t$ is continuous in $\Omega$. 
		\end{itemize}
		Then $\Phi=U$.
	\end{theorem}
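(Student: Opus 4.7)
The plan is to prove $\Phi \leq U$ and $U \leq \Phi$ separately. The first is inherited from the envelope definition of $U$; the second relies on a parabolic maximum-principle argument showing that $\sup_X(U_t - \Phi_t)$ is non-increasing in time.

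For $\Phi \leq U$: since $\Phi$ is a pluripotential solution, it is in particular a pluripotential subsolution to the Cauchy problem, so $\Phi \in \mathcal{S}_{\f_0, f, F}(X_T)$, and hence $\Phi \leq M_0$ on $X_T$ by Lemma~\ref{exist_sub}. The function $\max(\Phi, \underline{u})$ is again a subsolution to the Cauchy problem (Lemma~\ref{exist_sub} gives stability under finite maxima), satisfies $\underline{u} \leq \max(\Phi, \underline{u}) \leq M_0$, and is therefore a competitor in the family defining $U$; this yields $\Phi \leq \max(\Phi, \underline{u}) \leq U$.

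For $U \leq \Phi$, consider $M(t) := \sup_X (U_t - \Phi_t)$. By the same-singularity hypothesis $[\Phi_t] = [U_t]$ the quantity $M(t)$ is finite for each $t$; the parabolic-Lipschitz estimate~\eqref{famlip} for $U$ (Theorem~\ref{thmlips}) and the analogous bound for $\Phi$ (from its hypothesized semi-concavity) make $M$ upper semi-continuous on $(0, T)$, and the initial condition yields $\limsup_{t \to 0^+} M(t) \leq 0$. To conclude $M(t) \leq 0$ for all $t$, I would show $M$ is non-increasing. Fix $t \in (0, T)$ and, using continuity of $U_t$ and $\Phi_t$ on $\Omega$ (Theorem~\ref{thm:  sing} and the hypothesis), pick $x_t \in \Omega$ with $M(t) = (U_t - \Phi_t)(x_t)$. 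For $h > 0$ small,
\[
M(t+h) - M(t) \leq (U_{t+h} - \Phi_{t+h})(x_t) - (U_t - \Phi_t)(x_t),
\]
so the upper-right Dini derivative of $M$ at $t$ is bounded above by that of $U - \Phi$ at $(t, x_t)$. At this spatial maximum, the elliptic comparison principle \cite[Lemma 2.3]{darvas2020metric} applied to the plurifine neighborhoods $\{U_t > \Phi_t + M(t) - \delta\}$ of $x_t$ yields, in the limit $\delta \to 0$, the spatial inequality $(\omega_t + \dc U_t)^n \leq (\omega_t + \dc \Phi_t)^n$ near $x_t$. Plugging this into the Monge--Amp\`ere equations at time $t$ and invoking the monotonicity of $F$ in its last argument (with $U_t(x_t) \geq \Phi_t(x_t)$) gives $\partial_t^+ U_t(x_t) \leq \partial_t^+ \Phi_t(x_t)$, i.e., the upper-right Dini derivative of $M$ at $t$ is non-positive, as required.

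The main obstacle is rendering the "derivative at a maximum point" step rigorous in the pluripotential setting, where pointwise values of $\partial_t \f$ and of the non-pluripolar Monge--Amp\`ere mass are not individually available. Localization at $x_t \in \Omega$, together with the parabolic-Lipschitz and semi-concavity bounds of Theorem~\ref{thmlips} transferred to $\Phi$ via its hypothesized semi-concavity and matching singularity type, lets one test the inequalities on small parabolic cylinders $[t - \eta, t + \eta] \times V$ with $V \subset \Omega$ a neighborhood of $x_t$ and pass to the limit $\eta \to 0$; the Lipschitz dependence of $F$ in its last variable is useful to absorb error terms. This follows the strategy of the analogous uniqueness proof in~\cite{dang2022pluripotential}.
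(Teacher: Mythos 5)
Your first direction ($\Phi\leq U$) is fine: since a solution is in particular a subsolution, $\Phi\in\mathcal S_{\f_0,f,F}(X_T)$ and hence $\Phi\leq M_0$; replacing $\Phi$ by $\max(\Phi,\underline u)$ produces a competitor in the envelope family, giving $\Phi\leq U$. This matches the standard argument.

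The second direction ($U\leq\Phi$), which is where the content of uniqueness lives, has genuine gaps as written. First, you ``pick $x_t\in\Omega$ with $M(t)=(U_t-\Phi_t)(x_t)$,'' but nothing guarantees the supremum is attained: $U_t-\Phi_t$ is bounded (same singularity type) and continuous on $\Omega$, but $\Omega$ is a non-compact Zariski open set, and the sup could be approached only as one tends toward $\{\chi=-\infty\}$. Second, the step asserting that the elliptic comparison principle on the plurifine neighborhoods $\{U_t>\Phi_t+M(t)-\delta\}$ ``yields, in the limit $\delta\to 0$, the spatial inequality $(\omega_t+\dc U_t)^n\leq(\omega_t+\dc\Phi_t)^n$ near $x_t$'' is not correct: \cite[Lemma 2.3]{darvas2020metric} only gives the integrated inequality
\[
\int_{\{\Phi_t+M(t)-\delta<U_t\}}\theta_{U_t}^n\;\leq\;\int_{\{\Phi_t+M(t)-\delta<U_t\}}\theta_{\Phi_t}^n,
\]
and as $\delta\to 0$ both sides typically tend to $0$; it does not localize to a pointwise measure inequality near a specific $x_t$. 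Third, even with a measure inequality, comparing densities $e^{\dot U_t+F(t,\cdot,U_t)}f$ and $e^{\dot\Phi_t+F(t,\cdot,\Phi_t)}f$ only holds $dV$-a.e., so deducing a Dini-derivative comparison at the single point $x_t$ requires a Lebesgue-point argument that is not supplied. You yourself flag the ``derivative at a maximum point'' step as the main obstacle but leave it unresolved.

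The paper simply points to \cite[Prop.\ 3.4, Thm.\ 3.7]{dang2022pluripotential}, where (following the Guedj--Lu--Zeriahi scheme) the pointwise maximum-point argument is replaced by an integrated comparison: one perturbs one of the potentials (e.g.\ $(1-\e)\Phi+\e\,(\text{strictly}\ \omega_t\text{-psh potential})$), uses the semi-concavity/Lipschitz bounds to make sense of $\partial_t$ almost everywhere, integrates the subsolution/supersolution inequalities over sublevel sets in space and over short intervals in time, invokes the domination principle at each time slice, and then sends $\e\to 0$. Your outline captures the right intuition (a parabolic comparison principle), but without carrying out that integration-in-time-and-space argument, the proof does not close.
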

	\begin{proof}
		We refer the reader to~\cite[Prop. 3.4, Thm. 3.7]{dang2022pluripotential}.
	\end{proof}
	\bibliographystyle{alpha}
	\bibliography{bibfile}	
	
\end{document}